\documentclass{amsart}

\usepackage[latin1]{inputenc}
\usepackage{amsfonts}
\usepackage{amsmath}
\usepackage{amsthm}
\usepackage{amssymb}
\usepackage{latexsym}
\usepackage{enumerate}

\newtheorem{theorem}{Theorem}[section]
\newtheorem{lemma}[theorem]{Lemma}

\newtheorem{fact}[theorem]{Fact}

\newtheorem{definition}[theorem]{Definition}

\numberwithin{equation}{section}

\begin{document}

\newcommand{\cc}{\mathfrak{c}}
\newcommand{\N}{\mathbb{N}}
\newcommand{\Q}{\mathbb{Q}}
\newcommand{\R}{\mathbb{R}}

\newcommand{\PP}{\mathbb{P}}
\newcommand{\forces}{\Vdash}
\newcommand{\dom}{\text{dom}}
\newcommand{\osc}{\text{osc}}

\title{ON UNIVERSAL BANACH SPACES OF density CONTINUUM}

\author{CHRISTINA BRECH}
\thanks{The first author was supported by FAPESP fellowship (2007/08213-2), 
which is part of Thematic Project FAPESP (2006/02378-7). Part of the research was done
at the Technical University of \L\'od\'z where the first author was partially supported by Polish Ministry of
Science and Higher Education research grant N N201
386234.} 
\address{IMECC-UNICAMP, Caixa Postal 6065, 13083-970, Campinas, SP, Brazil}
\email{christina.brech@gmail.com}

\author{PIOTR KOSZMIDER}
\thanks{The second  author was partially supported by Polish Ministry of
Science and Higher Education research grant N N201
386234. Part of the research was done at the
University of Granada and supported by Junta de Andalucia and FEDER grant P06-FQM-01438.}

\address{Instytut Matematyki Politechniki \L\'odzkiej,
ul.\ W\'olcza\'nska 215, 90-924 \L\'od\'z, Poland}
\address{Departamento de An\'{a}lisis Matem\'{a}tico \\ Facultad de
 Ciencias \\ Universidad de Granada \\ 18071 Granada, Spain}
\email{\texttt{pkoszmider.politechnika@gmail.com}}

\subjclass{}
\date{}
\keywords{}

\begin{abstract}
We consider the question whether there exists a Banach space $X$ of density continuum such that
every Banach space of density not bigger than continuum 
 isomorphically embeds into $X$ (called a universal
Banach space of density $\cc$).
It is well known that $\ell_\infty/c_0$ is such a space if we assume the continuum hypothesis.
However, some additional set-theoretic assumption is needed, as we prove in the main result of
this paper that
it is consistent with the usual axioms of set-theory
  that there is no universal Banach space of density $\cc$.
Thus, the problem of the existence of a universal Banach space of density $\cc$
is undecidable using the usual axioms of set-theory.

We also prove that it is consistent that there are universal
Banach spaces of density $\cc$, but $\ell_\infty/c_0$ is not among them.
This relies on the proof of the consistency of the nonexistence
of an isomorphic embedding of $C([0,\cc])$ into
 $\ell_\infty/c_0$.

\end{abstract}

\maketitle

\section{Introduction}

Let $\mathcal{C}$ be a class of Banach spaces.
We will use the following standard notion of a universal Banach space for $\mathcal{C}$: 
$X$ is universal (isometrically universal) for $\mathcal{C}$ if $X\in \mathcal{C}$ and
for any $Y\in\mathcal{C}$ there is an isomorphic (isometric) embedding $T:Y\rightarrow X$.

Similarly, one can define a universal Boolean algebra for a class of Boolean
algebras where the embedding is a monomorphism (injective homomorphism) of Boolean algebras.
For topological compact Hausdorff spaces it is natural in this context to
use the dual  notion: $K$ is universal for 
a class $\mathcal{T}$ of compact Hausdorff spaces if $K\in \mathcal{T}$ and
for any $L\in\mathcal{T}$ there is a continuous surjection $T:K\rightarrow L$.

Probably the most known and useful result about the existence of universal Banach spaces
is the classical Banach-Mazur theorem (Theorem 8.7.2 of \cite{semadeni}) which says that $C([0,1])$ is 
an isometrically universal space for the class of all separable Banach spaces.
On the other hand, Szlenk's theorem proved in \cite{szlenk} says that
there are no universal spaces for separable reflexive Banach spaces.

It is well-known that topological and Boolean algebraic objects translate into
Banach-theoretic ones but, in general, not vice-versa (see Chapters 7, 8 and 16 of \cite{semadeni}),
 in particular we have the following:

\begin{fact}\label{facttopologicalreduction} 
If there is a
universal Boolean algebra of cardinality $\kappa$ or a universal totally disconnected
compact space $K$
of weight $\kappa$ or a universal continuum of weight $\kappa$,
then there is an isometrically universal Banach space of density $\kappa$.
On the other hand, if there is an (isomorphically) universal Banach
space of density $\kappa$, then there is one of the form $C(K)$ for 
$K$ totally disconnected and there is one of the form $C(K)$ for $K$ connected.
\end{fact}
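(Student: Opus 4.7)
The plan splits into the two implications of the fact, both resting on the duality between continuous surjections of compact Hausdorff spaces and isometric embeddings of their $C(\cdot)$ spaces.

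For the forward direction, the key observation I will use is that every Banach space $Y$ of density at most $\kappa$ embeds isometrically into $C(B_{Y^*})$, where $B_{Y^*}$ carries the weak${}^*$ topology and has weight at most $\kappa$. If $B$ is a universal Boolean algebra of cardinality $\kappa$, Stone duality turns it into a universal totally disconnected compact space of weight $\kappa$, which reduces this case to the next. If $K$ is a universal totally disconnected compact space of weight $\kappa$, I pick a closed $L\subseteq\{0,1\}^\kappa$ surjecting onto $B_{Y^*}$, obtained by restricting the coordinate-wise binary expansion map $\{0,1\}^\kappa\twoheadrightarrow[0,1]^\kappa$ to the preimage of an embedded copy of $B_{Y^*}$; universality then provides $K\twoheadrightarrow L\twoheadrightarrow B_{Y^*}$, which dualizes to $Y\hookrightarrow C(B_{Y^*})\hookrightarrow C(K)$. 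The continuum case is handled identically after replacing $L$ by the cone over $B_{Y^*}$, which is a continuum of weight at most $\kappa$ having $B_{Y^*}$ as a retract.

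For the converse direction, let $X$ be universal of density $\kappa$, and set $K_0=(B_{X^*},w^*)$, a compact space of weight $\kappa$ into whose $C(\cdot)$ space $X$ embeds isometrically. To produce a universal $C(K)$ with $K$ totally disconnected I take $K_{\text{td}}\subseteq\{0,1\}^\kappa$ closed and surjecting onto $K_0$, exactly as above; to produce one with $K$ connected I take $K_{\text{c}}$ to be the cone over $K_0$. Both spaces have weight at most $\kappa$, and the surjections $K_{\text{td}},K_{\text{c}}\twoheadrightarrow K_0$ dualize to isometric embeddings $X\hookrightarrow C(K_{\text{td}})$ and $X\hookrightarrow C(K_{\text{c}})$. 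Universality transfers because any Banach space $Y$ of density at most $\kappa$ embeds first into $X$, hence into each of these $C$-spaces.

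The single technical verification is that the weak${}^*$ weight of $B_{X^*}$ coincides with the density of $X$, which is a standard computation: a dense subset of $X$ of minimal cardinality separates points of $B_{X^*}$ and yields an embedding into a suitable product of intervals, while conversely $X\hookrightarrow C(B_{X^*})$ forces the reverse inequality via the identity $\text{density}(C(K))=\text{weight}(K)$ for infinite $K$. No step poses a genuine obstacle; the fact is a coherent packaging of Stone duality, the functorial behavior of $C(\cdot)$ under continuous surjections, and two standard covering constructions, namely closed pullback from $\{0,1\}^\kappa$ and coning.
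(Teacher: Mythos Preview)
Your proof is correct and follows essentially the same outline as the paper's: both rely on Stone duality, the isometric embedding $Y\hookrightarrow C(B_{Y^*})$, the fact that continuous surjections dualize to isometric embeddings of $C(\cdot)$ spaces, and the existence of a totally disconnected continuous preimage of any compact space with the same weight (you build it explicitly inside $\{0,1\}^\kappa$, the paper just cites the general fact).

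The one place you take a detour is in the connected cases, where you pass to the cone over $B_{Y^*}$ (resp.\ $B_{X^*}$) and use that the base is a retract of the cone. This is valid---$B_{Y^*}$ is convex in a topological vector space, hence contractible, hence a retract of its cone---but it is unnecessary: $B_{Y^*}$ is already compact and convex, so it is itself a continuum. The paper exploits this directly: if $K$ is a universal continuum one gets $K\twoheadrightarrow B_{Y^*}$ with no intermediate space, and for the converse $C(B_{X^*})$ is already a $C(K)$ with $K$ connected. Note also that the retract claim genuinely uses contractibility of $B_{Y^*}$; for a general compact $X$ there need be no continuous surjection from $\mathrm{Cone}(X)$ onto $X$ (take $X$ finite with more than one point), so if you keep the cone argument you should make the contractibility explicit. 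Dropping the cone step simply recovers the paper's proof.
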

\begin{proof}

By the Stone duality, the existence of a universal
Boolean algebra of cardinality $\kappa$ is equivalent to the existence
of a universal totally disconnected compact space of weight $\kappa$.
Also, any compact Hausdorff topological space has a totally disconnected preimage
of the same weight (Proposition 8.3.5 of \cite{semadeni}) and so,
a universal totally disconnected compact space of weight $\kappa$
is also universal among all compact spaces of weight $\kappa$.

Recall that any Banach space $X$ is isometric to a subspace of $C(B_{X^*})$ 
(Proposition 6.1.9 of \cite{semadeni}), where 
$B_{X^*}$ is the dual unit ball of $X$ considered with the 
weak$^*$ topology, which is connected and has weight equal to
the density of $X$. 

To prove the first assertion, suppose there is a universal compact space $K$ (either totally
disconnected or connected) of weight $\kappa$. Given any Banach space of density $\kappa$ we get,
in both cases, a continuous surjection $\phi:K\rightarrow B_{X^*}$. 
The fact that $\phi$
induces an isometric embedding of $C(B_{X^*})$ into $C(K)$ (Theorem 4.2.2 of \cite{semadeni}) and
that $X$ can be isometrically embedded in $C(B_{X^*})$
implies that $C(K)$ is an isometrically universal Banach space of density $\kappa$.

Let us now prove the second assertion. If there is a 
 universal Banach space $X$ of density $\kappa$, as $X$ can be isometrically
embedded in $C(B_{X^*})$, any Banach space of density $\kappa$ can be embedded as well,
so that $C(B_{X^*})$ is a universal Banach space of density $\kappa$ (and
$B_{X^*}$ is connected). But $B_{X^*}$ has a continuous preimage $K$
which is totally disconnected and of the same weight, so that $C(K)$ is a 
universal Banach space of density $\kappa$ as well.
\end{proof}

In this paper we consider the question of the existence of 
a universal Banach space of density continuum, denoted $\cc$.

As Parovi\v cenko proved in \cite{parovicenko} that 
under CH, $\wp(\N)/Fin$ is a universal Boolean algebra of cardinality $\cc$,
the same hypothesis implies the existence of an isometrically universal Banach space
of density $\cc$, namely $C(K)$ where $K$ is the Stone space of $\wp(\N)/Fin$ (homeomorphic to $\beta\N\setminus\N$). 
Moreover, $C(K)$ is isometric to
$\ell_\infty/c_0$.
Conversely, using quite general model-theoretic methods,
Shelah and Usvyatsov showed in \cite{shelah} among others that it is consistent
that there is no isometrically universal Banach space of density $\cc$.
We can summarize these results as:

\begin{theorem}[\cite{parovicenko}, \cite{shelah}]
Assuming CH,  $\ell_\infty/c_0$ is an isometrically universal Banach space of density $\cc$.
On the other hand,
 it is consistent that there is no isometrically universal Banach space of density $\cc$.
\end{theorem}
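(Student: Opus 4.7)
For the first statement, the plan is to apply Fact~\ref{facttopologicalreduction} directly. By Parovi\v{c}enko's theorem of \cite{parovicenko}, under CH the Boolean algebra $\wp(\N)/\mathrm{Fin}$ is universal of cardinality $\cc$. Its Stone space $K$ is homeomorphic to $\beta\N\setminus\N$, so Fact~\ref{facttopologicalreduction} furnishes $C(K)$ as an isometrically universal Banach space of density $\cc$. It remains to identify $C(\beta\N\setminus\N)$ with $\ell_\infty/c_0$: this follows from $\ell_\infty = C(\beta\N)$ together with the observation that, under this identification, $c_0$ corresponds to the ideal of continuous functions on $\beta\N$ vanishing on the remainder $\beta\N\setminus\N$.

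For the second (consistency) statement, I would follow the continuous-logic approach of Shelah and Usvyatsov~\cite{shelah}. The plan is to start in a model of GCH, force $\cc=\aleph_2$ (for instance by adding $\aleph_2$ Cohen reals), and argue in the extension that no Banach space of density $\aleph_2$ is isometrically universal. The engine is that the theory of Banach spaces, viewed in continuous logic, is unstable, so Shelah's non-structure theory (adapted to continuous logic) yields a profusion of pairwise incompatible isometric types. Concretely, one would construct sufficiently many Banach spaces of density $\aleph_2$ whose isometric embedding types cannot simultaneously be realized as subspaces of any single candidate $Y$ of density $\aleph_2$, since such a $Y$ admits only $\aleph_2$ many ``separable isometric profiles''.

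The main obstacle is this second half, where two linked difficulties stand out. First, the argument must be sharply sensitive to \emph{isometry} rather than mere isomorphism, which is why a generic Cohen-style construction is natural: it provides the fine metric control needed to distinguish spaces up to isometry. Second, the non-structure argument has to be carried out in continuous logic, where moduli of uniform continuity replace discrete types and where ultraproducts of Banach spaces must be handled with care to produce the obstructing family. Reconciling these model-theoretic ingredients with the combinatorics available in the Cohen extension is precisely the technical contribution of \cite{shelah}, and any reasonable self-contained execution would have to reproduce a substantial part of that framework.
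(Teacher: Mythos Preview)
Your proposal is correct and matches the paper's treatment: the paper does not give an independent proof of this theorem but simply derives the first assertion from Parovi\v{c}enko's result via Fact~\ref{facttopologicalreduction} together with the identification $C(\beta\N\setminus\N)\cong\ell_\infty/c_0$, and attributes the second assertion to the model-theoretic work of Shelah and Usvyatsov~\cite{shelah}. Your sketch adds some detail (the explicit reason $\ell_\infty/c_0\cong C(\beta\N\setminus\N)$ and a rough outline of the continuous-logic non-structure argument), but the underlying route is the same.
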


The main result of our paper, proved in Section 2, shows that 
for the existence of a universal Banach space of density $\cc$
(where not only isometric isomorphisms are allowed as embeddings, but we allow all isomorphisms)
 some extra set-theoretic assumption is also necessary:

\begin{theorem}\label{theoremmainresult}
It is consistent that there is no universal Banach space of density $\cc$.
\end{theorem}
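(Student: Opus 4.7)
The plan is to use a cardinal-preserving forcing iteration (most likely ccc) to produce a ZFC model in which no Banach space of density $\cc$ is universal. By Fact \ref{facttopologicalreduction} it suffices to rule out universal $C(K)$ with $K$ totally disconnected of weight $\cc$, and via Stone duality this becomes the statement that no Boolean algebra of cardinality $\cc$ is Banach-universal in the sense required, i.e.\ no single compact totally disconnected space of weight $\cc$ continuously surjects onto all compact spaces of weight at most $\cc$.

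First I would force $\cc=\aleph_2$ while generically adjoining a rich combinatorial skeleton: e.g.\ a coherent almost disjoint system on $\N$, or a coherent family of functions on $\aleph_1$, equipped with a Todor\v cevi\'c-style oscillation function $\osc$. From this generic data one manufactures, for each $S\subseteq\cc$, a totally disconnected compact space $K_S$ of weight $\cc$ whose clopen algebra encodes $S$ through the oscillation pattern of the family. The design goal is rigidity: if $T:C(K_S)\to C(K)$ is an isomorphic embedding with $K$ of weight $\cc$, then after a small-norm perturbation $T$ should be governed by combinatorial data internal to the clopen algebra of $K$, and each such datum should decode at most $\aleph_0$ possible $S$'s.

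For a fixed candidate universal $C(K)$ of density $\cc$, this yields at most $\cc^{\aleph_0}=\cc$ subsets $S$ for which $C(K_S)$ embeds in $C(K)$; since the family $\{K_S:S\subseteq\cc\}$ has $2^{\cc}>\cc$ members, some $K_S$ is missed and witnesses non-universality. The main obstacle is the operator-to-skeleton reduction: showing that in the extension \emph{every} bounded operator $T:C(K_S)\to C(K)$, for every relevant pair in the model, admits such a combinatorial skeleton. This calls for a carefully tailored iteration (so that the generic $\osc$ behaves uniformly on all pairs of clopen algebras constructed along the way) together with preservation lemmas ensuring that no later stage of the iteration destroys the skeletons produced at earlier stages; these two ingredients — uniform genericity and preservation — are where the technical weight of the argument will lie.
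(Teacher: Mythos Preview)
Your plan is a sketch, not a proof, and the missing step is precisely the one that carries all the weight. You acknowledge that the ``operator-to-skeleton reduction'' is the crux, but you give no mechanism for it: why should an \emph{isomorphic} embedding $T:C(K_S)\to C(K)$ be governed by combinatorial data in $Clop(K)$ that decodes $S$, even after perturbation? For isometric embeddings one has order-preservation and hence Stone-duality tools; for isomorphisms one does not, and this is exactly the obstruction the paper flags in the introduction. Oscillation-type rigidity has been used to distinguish Boolean algebras or homeomorphism types, but transferring that to arbitrary bounded operators on the $C(K)$ level requires a genuine new idea that your outline does not supply. Without it, the counting step (``at most $\cc^{\aleph_0}$ subsets $S$'') has no basis.

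The paper's argument is structurally quite different and avoids any rigidity claim. It does not build $2^{\cc}$ test spaces and count; instead it forces with a product $\PP_1\times\PP_2$ over a GCH model, where $\PP_1$ adds a strong almost disjoint family $(X_\gamma)_{\gamma<\omega_2}$ of subsets of $\omega_1$ (pairwise \emph{finite} intersections) and $\PP_2=Fn(\omega_3\times\omega_1,2,\omega_1)$ adds $\omega_3$ subsets of $\omega_1$. Given a candidate algebra $\mathcal A$ of size $\cc=\omega_2$, one first localizes it to some intermediate model $V^{\PP_1\times\PP_2(\alpha)}$, and then takes as the single test space $C(L)$ where $L$ is the Stone space of $\wp(\omega_1)\cap V^{\PP_1\times\PP_2(\alpha+\omega_2)}$. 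A putative embedding $T:C(L)\to C(K)$ is represented by a family of measures $(\mu_\xi)_{\xi<\omega_1}$ via $\mu_\xi=T^*(\overline\rho_\xi)$; a purely measure-theoretic lemma (Lemma~\ref{lemaCombinatorio}) shows that on all but $\omega_1$ of the $X_\gamma$'s each $\mu_\xi$ is atomic in the sense $\mu_\xi([X])=\sum_{\lambda\in X}\mu_\xi([\{\lambda\}])$ for $X\subseteq X_\gamma$. This forces $\overline\rho_\xi(T(\chi_{[X_\gamma\cap G_\beta]}))$ to compute the characteristic function of $X_\gamma\cap G_\beta$, and since the measures $\rho_\xi$ live in a small intermediate model one concludes $X_\gamma\cap G_\beta$ lies there too, contradicting genericity of the fresh coordinate $\beta$. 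The strong almost disjointness on $\omega_1$ is what makes the measure lemma go through and is the substitute for the order-preservation you do not have.
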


Our proof is quite inspired by the proof in \cite{dowhart} that 
it is consistent that there is no universal totally disconnected space (nor continuum) of density $\cc$.
However, the proof of \cite{dowhart} is allowed to
 rely on the fact that their embeddings (homeomorphisms) preserve set-theoretic
operations as well as the inclusion, which is not true in general for linear operators, i.e.,
$A\subseteq B$ does not imply that $T(\chi_A)\leq T(\chi_B)$, etc. 
Actually, by the Kaplansky theorem if there is
an order isomorphism between Banach spaces  $C(K)$ and $C(K')$
then $K$ and $K'$ are homeomorphic and so, there is 
exists an isometry of the Banach spaces (Theorem 7.8.1 of \cite{semadeni}).
Hence preserving the order
lies at heart of the difference between universal and isometrically universal
Banach spaces.

To overcome these difficulties we use a strong almost disjoint family
of subsets of $\omega_1$ (first constructed in \cite{baumgartner}),
i.e., a family $(X_\xi:\xi<\omega_2)$ of subsets of
$\omega_1$ such that $X_\xi\cap X_\eta$ is finite for distinct $\xi,\eta<\omega_2$.
Already the existence of such a family cannot be proved without extra set-theoretic assumptions.
Similar uses of almost disjoint families of $\N$ instead of $\omega_1$
were apparently initiated  by 
Whitley in his proof of the fact that $c_0$ is not complemented in $\ell_\infty$ (\cite{whitley}).

In Section 3 we prove that even if there are universal Banach spaces of density continuum,
$\ell_\infty/c_0$ does not have to be one of them.

\begin{theorem}
It is consistent that there are universal Banach spaces
of density $\cc$, but $\ell_\infty/c_0$ is not among them.
\end{theorem}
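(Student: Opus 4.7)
The strategy is to produce a model of set theory in which (a) some universal Banach space of density $\cc$ exists, while (b) the specific space $C([0,\cc])$ fails to embed isomorphically into $\ell_\infty/c_0$. Since $C([0,\cc])$ has density $\cc$, condition (b) immediately rules out $\ell_\infty/c_0$ as universal, and (a) supplies an alternative witness.

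For (a), I plan, over a ground model satisfying CH, a finite-support ccc iteration of length $\omega_2$ performing a Fra\"{\i}ss\'e-style amalgamation on Boolean algebras of cardinality at most $\omega_2$, yielding in the extension a universal Boolean algebra of cardinality $\omega_2=\cc$. By Fact~\ref{facttopologicalreduction}, its Stone space provides a universal Banach space of density $\cc$ of the form $C(K)$.

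For (b)---the heart of the proof---assume toward contradiction that $T\colon C([0,\omega_2]) \to \ell_\infty/c_0$ is an isomorphic embedding and choose representatives $f_\alpha \in \ell_\infty$ of $T(\chi_{[0,\alpha]})$. The family $\{f_\alpha : \alpha<\omega_2\}$ is norm-bounded and its differences $f_\beta-f_\alpha$ (for $\alpha<\beta$) have $\ell_\infty/c_0$-norm bounded below by $\|T^{-1}\|^{-1}$. I would apply stabilization techniques---pressing-down and $\Delta$-system arguments on the oscillation patterns of $f_\alpha$ on finite subsets of $\omega$---to reduce to an uncountable subfamily whose pairwise differences concentrate on a controlled sequence of pieces of $\omega$. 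Then I would invoke a combinatorial property of $\wp(\omega)/Fin$ preserved by the iteration (an obstruction to faithfully coding the $\omega_2$-chain $\{\chi_{[0,\alpha]}\}$, akin to the absence of certain towers or gaps) to locate $\alpha<\beta$ in the subfamily with $f_\alpha\equiv f_\beta\pmod{c_0}$, contradicting $T(\chi_{(\alpha,\beta]})\neq 0$.

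The main obstacle is arranging (a) and (b) in the \emph{same} extension: the amalgamation must be rich enough to produce a universal object of cardinality $\omega_2$ yet mild enough on $\wp(\omega)/Fin$ to preserve the combinatorial barrier blocking $C([0,\omega_2])$. I expect the iteration to be designed so that amalgamation steps act on fresh generic objects of size $\omega_2$ essentially orthogonal to the combinatorics of $\omega$, while careful bookkeeping guarantees that $\wp(\omega)/Fin$ does not acquire the towers or interpolation properties needed to embed an $\omega_2$-chain of characteristic functions.
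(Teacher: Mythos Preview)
Your proposal has a genuine gap in part (b). The mechanism you sketch---stabilizing oscillation patterns and then invoking some combinatorial obstruction in $\wp(\omega)/Fin$ such as the absence of towers or gaps---has no concrete content, and there is good reason to think it cannot be filled in as stated. The images $T(\chi_{[0,\alpha]})$ are arbitrary elements of $\ell_\infty/c_0$, not (equivalence classes of) characteristic functions, so there is no direct translation into chains or gaps of subsets of $\omega$. The paper addresses exactly this point at the end of Section~3: it is \emph{not} known whether the absence of $\omega_2$-chains in $\wp(\omega)/Fin$ alone rules out an isomorphic embedding of $C([0,\omega_2])$ into $\ell_\infty/c_0$, and the authors exhibit a totally disconnected $L$ with no uncountable well-ordered chain of clopens that nonetheless contains an isometric copy of $C([0,\kappa])$. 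So the reduction you are hoping for is precisely the step that is not available, and your target conclusion ``$f_\alpha\equiv f_\beta\pmod{c_0}$'' has no visible source.

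The paper works instead in the plain Cohen model (adding $\omega_2$ Cohen reals over a model of GCH). For (a), existence of a universal space follows from cited results: Fremlin--Nyikos give a $\cc$-saturated ultrafilter on $\N$ in this model, Dow--Hart observe this yields a universal continuum of weight $\cc$, and Fact~\ref{facttopologicalreduction} finishes; no bespoke iteration is needed, and the tension you anticipate between (a) and (b) never arises. For (b), after a $\Delta$-system reduction on the countable supports of nice names $\dot f_\alpha$ for representatives of $T(\chi_{[0,\alpha]})$, the key idea---missing from your outline---is to exploit the automorphism group of Cohen forcing induced by permutations of $\omega_2$: after thinning, any permutation $\sigma$ of the surviving indices lifts to an automorphism $\pi_\sigma$ of $\PP$ with $\pi_\sigma^*(\dot f_\alpha)=\dot f_{\sigma(\alpha)}$. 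One then chooses a specific permutation sending a configuration of $2k$ indices that gives \emph{disjoint} intervals $(\alpha_{2i-1},\alpha_{2i}]$ (so $\|\sum_i(\chi_{[0,\alpha_{2i}]}-\chi_{[0,\alpha_{2i-1}]})\|=1$) to one giving \emph{nested} intervals (norm $\geq k-1$). Since $\PP$ forces the same statements before and after applying $\pi_\sigma^*$, this contradicts $\|T\|\cdot\|T^{-1}\|<k-1$ for $k$ chosen large at the outset. The contradiction is a norm estimate obtained via symmetry, not the discovery of two equal representatives.
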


The model where this  takes place is the standard Cohen model. This time we
follow the main idea of the proof by Kunen in \cite{kunendoc} of the fact that in this
model the algebra $\wp(\N)/Fin$ does not contain well-ordered chains of length $\cc$. 
The main trick is to use the richness of the group of automorphisms of
Cohen's forcing which are induced by permutations of $\omega_2$. 
This allows us to prove that $\ell_\infty/c_0$ does not contain an isomorphic
copy of $C(K)$, where $K$ is the Stone space of a well-ordered chain of length $\cc$ (such a
$K$ is simply homeomorphic to $[0,\cc]=[0,\omega_2]$ with the order topology).
However, not all permutations which can be used in Kunen's proof would work 
in our argument.
\vskip 6pt

The terminology concerning forcing is based on \cite{kunen} and the one concerning
$C(K)$ spaces is based on \cite{semadeni}.
The results of this paper answer questions 5 and 6 from \cite{opit}.

\section{Nonexistence of a universal space of density $\cc$}

The model in which there will be no universal Banach spaces of density
$\mathfrak{c}$ is the model obtained by a product of
two forcings,
$\mathbb{P}_1$ and $\mathbb{P}_2$.
$\mathbb{P}_1$ is the c.c.c. forcing of Section 6 of  \cite{baumgartner} which adds
a strong almost disjoint family $(X_\xi: \xi<\omega_2)$ of uncountable subsets 
of $\omega_1$, that is, 
$X_\xi\cap X_\eta$ is finite whenever $\xi\neq\eta$ and $\mathbb{P}_2$ is the standard $\sigma$-closed and 
$\omega_2$-c.c. forcing for adding $\omega_3$ subsets of $\omega_1$ with countable conditions
($Fn(\omega_3\times\omega_1, 2,\omega_1)$ of Definition 6.1 of \cite{kunen}).
The ground model $V$ is a model of GCH.

\begin{definition}[\cite{baumgartner} Section 6]

Fix a family $(Y_\xi)_{\xi<\omega_2}$ of uncountable subsets of $\omega_1$ 
such that $Y_\xi \cap Y_\eta$ is countable for different $\xi, \eta \in 
\omega_2$ and let $\mathbb{P}_1$ be the partial order of functions $f$ whose
domain $\dom f$ is a finite subset of $\omega_2$, $f(\xi) \in [Y_\xi]^{<\omega}$
for every $\xi \in \dom f$ and, given $f, g \in \mathbb{P}_1$, put $f \leq g$ if

\begin{itemize}
\item $\dom g \subseteq \dom f$;
\item $g(\xi) \subseteq f(\xi)$ for every $\xi \in \dom g$;
\item $f(\xi) \cap f(\eta) = g(\xi) \cap g(\eta)$ for different $\xi, \eta \in \dom g$.
\end{itemize}

\end{definition}

Let us remind some properties of $\mathbb{P}_1$ which we will need.

\begin{lemma}\label{lemaP1}
The following assertions hold:

\begin{enumerate}[(a)]
\item $\mathbb{P}_1$  is c.c.c. of cardinality $\omega_2$.
\item $\mathbb{P}_1$   has precaliber $\omega_2$, 
that is, every set of cardinality $\omega_2$
has a centered subset of cardinality $\omega_2$.
\item $\mathbb{P}_1$ forces that there is a strong almost
disjoint family $(X_\xi: \xi<\omega_2)$ 
of uncountable subsets of $\omega_1$ (this is denoted 
$A(\aleph_1, \aleph_2,\aleph_1, \aleph_0)$ in \cite{baumgartner}).
\item If the ground model is a model of GCH, then
$\mathbb{P}_1$ forces $\cc=\omega_2$ and that GCH holds at other cardinals.
\end{enumerate}

\end{lemma}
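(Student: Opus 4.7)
The plan is to verify (a)--(d) by standard combinatorics of c.c.c.\ forcings, using the $\Delta$-system lemma, pigeonhole, and the countable almost-disjointness of $(Y_\xi)_{\xi<\omega_2}$. The underlying observation is that two conditions $f,g\in\mathbb{P}_1$ with $\Delta:=\dom f\cap\dom g$ and $f\restriction\Delta=g\restriction\Delta$ admit the common extension $h:=f\cup g$: for $\xi,\eta\in\dom f$, $h$ agrees with $f$ on $\dom f$, so $h(\xi)\cap h(\eta)=f(\xi)\cap f(\eta)$, and symmetrically for $g$.

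For (b), take an arbitrary $\omega_2$-sized family $(f_\alpha)_{\alpha<\omega_2}$, apply the $\Delta$-system lemma to the finite domains to extract an $\omega_2$-subfamily sharing a root $\Delta\subseteq\omega_2$, and then pigeonhole into a further $\omega_2$-subfamily on which the restrictions $f_\alpha\restriction\Delta$ are constant. The pigeonhole works because $\prod_{\xi\in\Delta}[Y_\xi]^{<\omega}$ has cardinality $\omega_1$ (by GCH in $V$ and finiteness of $\Delta$), while $\omega_2$ is regular. Any finite subcollection is then centered by the construction above.

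For (a), the analogous pigeonhole fails at $\omega_1$ because $|[Y_\xi]^{<\omega}|=\omega_1$ matches the family size. I would refine further in two stages. First, apply a second $\Delta$-system lemma to the finite sets $s_\alpha:=\bigcup_{\xi\in\dom f_\alpha}f_\alpha(\xi)\subseteq\omega_1$, obtaining a root $R\subseteq\omega_1$ with the pieces $S_\alpha:=s_\alpha\setminus R$ pairwise disjoint. Second, pigeonhole into an $\omega_1$-subfamily on which both the traces $f_\alpha(\zeta)\cap R$ for each coordinate $\zeta\in\dom f_\alpha$ (in some canonical enumeration) and the pair-intersections $I_{\xi,\eta}:=f_\alpha(\xi)\cap f_\alpha(\eta)$ for $\xi,\eta\in\Delta$ are independent of $\alpha$; this is possible because the traces take only finitely many values (they are subsets of the fixed finite $R$), and the pair-intersections lie in the countable sets $Y_\xi\cap Y_\eta$. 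For any two $f_\alpha,f_\beta$ in the refined family, the amalgam $h$ defined by $h(\xi):=f_\alpha(\xi)\cup f_\beta(\xi)$ on $\Delta$, $h(\xi):=f_\alpha(\xi)$ on $\dom f_\alpha\setminus\Delta$, and $h(\xi):=f_\beta(\xi)$ on $\dom f_\beta\setminus\Delta$ is a common extension: when verifying $h\leq f_\alpha$, all cross terms $f_\beta(\xi)\cap f_\alpha(\eta)$ lie inside $R$ (as $S_\alpha\cap S_\beta=\emptyset$) and there equal $(f_\alpha(\xi)\cap R)\cap f_\alpha(\eta)\subseteq f_\alpha(\xi)\cap f_\alpha(\eta)$ by trace constancy, while the cross term $f_\beta(\xi)\cap f_\beta(\eta)$ for $\xi,\eta\in\Delta$ equals $I_{\xi,\eta}=f_\alpha(\xi)\cap f_\alpha(\eta)$ by pair-intersection constancy; the check of $h\leq f_\beta$ is symmetric. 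The cardinality bound $|\mathbb{P}_1|=\omega_2$ is a routine count.

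Part (c) is a density argument. For uncountability of $X_\xi$: given $\alpha<\omega_1$ and $\xi<\omega_2$, any $f\in\mathbb{P}_1$ (with $\xi$ adjoined to the domain if missing) admits an extension enlarging $f(\xi)$ by a fresh $\beta\in Y_\xi$ above $\alpha$ avoiding the finite forbidden set $\bigcup_{\eta\in\dom f\setminus\{\xi\}}f(\eta)$, which exists by $|Y_\xi|=\omega_1$. For finiteness of $X_\xi\cap X_\eta$ with $\xi\neq\eta$: pick any $h_0\in G$ with $\{\xi,\eta\}\subseteq\dom h_0$ (densely available), then the intersection clause of the ordering forces $h'(\xi)\cap h'(\eta)=h_0(\xi)\cap h_0(\eta)$ for every $h'\leq h_0$ in $G$, so $X_\xi\cap X_\eta=h_0(\xi)\cap h_0(\eta)$ is finite. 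Part (d) follows from standard nice-name counting: c.c.c., $|\mathbb{P}_1|=\omega_2$, and GCH in $V$ cap all cardinal exponentials in $V[G_1]$, while the standard observation that $\mathbb{P}_1$ adjoins $\omega_2$-many distinct reals (extractable from the pairwise distinct generic $X_\xi$'s) gives $\cc\geq\omega_2$. The main obstacle is the c.c.c.\ verification in (a): the failure of the $\omega_1$-level pigeonhole on $f_\alpha\restriction\Delta$ forces the additional $\Delta$-system on the $s_\alpha$'s and the essential use of the countable almost-disjointness of the $Y_\xi$'s.
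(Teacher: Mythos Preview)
Your arguments for (a), (b), and (c) are correct and essentially coincide with the paper's (the paper simply defers (a) and (c) to Baumgartner's original article and writes out (b) exactly as you do, via the $\Delta$-system on domains followed by a pigeonhole on the restrictions to the root). Your more detailed verification of c.c.c.\ in (a) is sound: once the traces $f_\alpha(\xi)\cap R$ for $\xi\in\Delta$ and the pairwise intersections $f_\alpha(\xi)\cap f_\alpha(\eta)\subseteq Y_\xi\cap Y_\eta$ for $\xi,\eta\in\Delta$ are stabilized, your amalgam is indeed a common extension (note that trace constancy is only needed on coordinates in $\Delta$, which is all your case analysis actually uses).

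There is, however, a genuine gap in your argument for $\cc\geq\omega_2$ in (d). You assert that $\mathbb{P}_1$ ``adjoins $\omega_2$-many distinct reals extractable from the pairwise distinct generic $X_\xi$'s,'' but the $X_\xi$'s are subsets of $\omega_1$, not of $\omega$; their pairwise distinctness only yields $2^{\omega_1}\geq\omega_2$, which is already true in the ground model under GCH. You have not indicated any mechanism by which a real is coded into an individual $X_\xi$ (and the single-coordinate projection of $\mathbb{P}_1$ is trivial, so there is no obvious Cohen real hiding there). The paper closes this gap differently: it invokes Baumgartner's Theorem~3.4(a), which says that under CH there is no strong almost disjoint family of $\omega_2$ uncountable subsets of $\omega_1$. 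Since (c) produces exactly such a family in $V^{\mathbb{P}_1}$, CH must fail there, i.e.\ $\cc\geq\omega_2$. You should either cite this result or supply an explicit construction of $\omega_2$ reals in the extension.
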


\begin{proof}
For (a) and (c) see Section 6 of \cite{baumgartner}.
To prove (b), fix $(f_\alpha)_{\alpha \in \omega_2} 
\subseteq \mathbb{P}_1$. By the $\Delta$-system lemma, 
there is a subset $S \subseteq \omega_2$ of cardinality
 $\omega_2$ such that $(\dom f_\alpha)_{\alpha \in S}$
 is a $\Delta$-system of root $\Delta$. Since for each
 $\xi \in \Delta$ there are at most $\omega_1$ possibilities 
for $f_\alpha(\xi)$ (because $f_\alpha(\xi) 
 \in [Y_\xi]^{< \omega}$ and $|Y_\xi |=\omega_1$),
 by thinning out the family a finite number ($|\Delta|$)
 of times, we can assume without loss of generality that 
$f_\alpha|_\Delta =f_\beta|_\Delta$ for every $\alpha, 
\beta \in S$, which makes $(f_\alpha)_{\alpha \in S}$ a centered family. 
 For $\cc\leq\omega_2$
and GCH at other cardinals in (d), use the standard argument with nice-names (Lemma VII 5.13 of \cite{kunen}).
To obtain that
 $\cc\geq\omega_2$ in $V^{\PP_1}$, use 
Theorem 3.4 (a) of \cite{baumgartner}, where it is proved 
that under CH there is no strong almost disjoint family of size $\omega_2$.
\end{proof}

\begin{definition}
Let $\mathbb{P}_2$ be the forcing formed by partial
 functions $f$ whose domain $\dom f$ is a countable subset 
of $\omega_3 \times \omega_1$ and whose range is included 
in $2=\{0, 1\}$, ordered by extension of functions.
 Given a subset $A \subseteq \omega_3$, 
we denote by $\mathbb{P}_2(A)$ the forcing formed by the elements of $\mathbb{P}_2$ 
whose domain is included in $A\times\omega_1$.
\end{definition}

We summarize in the next lemma the properties of $\mathbb{P}_2$ which we will use.

\begin{lemma}\label{lemaP2} Assume GCH.
The following assertions hold:
\begin{enumerate}[(a)]
\item $\mathbb{P}_2$ is
isomorphic to $\mathbb{P}_2(A) \times \mathbb{P}_2(\omega_3\setminus A)$, for any $A \subseteq \omega_3$.
\item $\mathbb{P}_2$ is $\sigma$-closed and $\omega_2$-c.c.
\end{enumerate}
\end{lemma}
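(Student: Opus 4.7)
The plan is to treat the two parts independently, since each is a standard fact about this type of partial function forcing.

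For (a), the natural approach is to exhibit an explicit isomorphism by restriction. Given $A \subseteq \omega_3$, define
\[
\Phi : \mathbb{P}_2 \to \mathbb{P}_2(A) \times \mathbb{P}_2(\omega_3 \setminus A),
\qquad \Phi(f) = \bigl(f \upharpoonright (A \times \omega_1),\, f \upharpoonright ((\omega_3 \setminus A) \times \omega_1)\bigr).
\]
Each restriction still has countable domain, so it lies in the appropriate factor, and the inverse is given by union of compatible functions with disjoint domains. Verifying that $\Phi$ and $\Phi^{-1}$ preserve the extension order is routine because the order is purely local to coordinates.

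For (b), $\sigma$-closedness is immediate: a descending $\omega$-chain $f_0 \geq f_1 \geq \cdots$ is a directed set of functions with countable domains, so $\bigcup_n f_n$ is still a function with countable domain (countable union of countable sets) and is a lower bound. The interesting step is $\omega_2$-c.c. The main obstacle here is purely cardinal arithmetic: we must ensure that the $\Delta$-system lemma applies to an $\omega_2$-sized family of countable subsets of $\omega_3 \times \omega_1$. Under GCH, CH gives $|\alpha|^{\aleph_0} = \aleph_1 < \aleph_2$ for every $\alpha < \omega_2$, and $\omega_2^{\aleph_0} = \omega_2$, so the standard $\Delta$-system lemma (in the form of Theorem II.1.6 of \cite{kunen}) applies to countable sets indexed by $\omega_2$.

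Thus, given any $\omega_2$-sized antichain candidate $(f_\alpha)_{\alpha < \omega_2} \subseteq \mathbb{P}_2$, we first refine to an $\omega_2$-sized subfamily whose domains form a $\Delta$-system with a fixed countable root $R \subseteq \omega_3 \times \omega_1$. Since each $f_\alpha \upharpoonright R$ is an element of $2^R$ and $|2^R| \leq 2^{\aleph_0} = \aleph_1 < \aleph_2$ under CH, we apply pigeonhole to find an $\omega_2$-sized further subfamily on which all the restrictions $f_\alpha \upharpoonright R$ agree. Any two conditions in this final subfamily agree on the intersection of their domains and therefore their union is a common extension in $\mathbb{P}_2$, showing that no antichain of size $\omega_2$ exists. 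This proves (b), completing the lemma.
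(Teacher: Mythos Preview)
Your proof is correct and follows the standard argument; the paper itself does not give a proof at all but simply refers to Section~VII.6 of \cite{kunen}, where essentially the same reasoning appears. One cosmetic point: for finite $\alpha$ the equality $|\alpha|^{\aleph_0}=\aleph_1$ fails, so write $|\alpha|^{\aleph_0}\leq\aleph_1$ instead; this does not affect the argument.
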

\begin{proof}
See Section VII 6 of \cite{kunen}.
\end{proof}

And finally we conclude some properties of the product $\mathbb{P}_1 \times \mathbb{P}_2$.

\begin{lemma}\label{lemaP1xP2} Assume GCH.
The following assertions hold:
\begin{enumerate}[(i)]
\item $\mathbb{P}_2$ forces that $\check{\mathbb{P}_1}$ is c.c.c.
\item $\PP_1\times\PP_2$ is $\omega_2$-c.c.
\item $\mathbb{P}_1 \times \mathbb{P}_2$ preserves cardinals and in
 $V^{\mathbb{P}_1\times \mathbb{P}_2}$ we have that $\cc=\omega_2$.

\item Let $\kappa=\omega_1$ or $\kappa=\omega_2$ and let  $A\subseteq \omega_3$.
If $X$ is in the model $V^{\PP_1\times\PP_2(A)}$ and $(Y_\xi:\xi<\kappa)\in V^{\PP_1\times\PP_2}$ is a
sequence of subsets of $X$ all of cardinality $\leq\kappa$, then
there is in $V$ a subset $A'$ of $\omega_3$ of cardinality $\leq\kappa$ such that
$(Y_\xi:\xi<\kappa)\in V^{\PP_1\times\PP_2(A\cup A')}$.

\item Suppose $A\subseteq \omega_3$ and $\beta\in \omega_3\setminus A$.
 If $X\in V^{\PP_1}$ is an uncountable
subset of $\omega_1$, then $X\cap G_\beta\not\in V^{\PP_1\times \PP_2(A)}$ 
where $G_\beta=G\cap{\PP}_2(\{\beta\})$ and $G$ is $\PP_1\times \PP_2$-generic over $V$.
\end{enumerate}
\end{lemma}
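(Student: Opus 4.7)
The plan is to prove (i)--(v) in order, using the $\sigma$-closure and $\omega_2$-c.c.\ of $\PP_2$, the c.c.c.\ and precaliber $\omega_2$ of $\PP_1$, GCH in $V$, and the product decomposition Lemma \ref{lemaP2}(a). For (i), I argue by contradiction: if some $q_0\in\PP_2$ forces $\{\dot{p}_\alpha:\alpha<\omega_1\}$ to be an uncountable antichain in $\PP_1$, then I recursively build a decreasing sequence $(q_\alpha)_{\alpha<\omega_1}$ below $q_0$ with $q_{\alpha+1}$ deciding $\dot{p}_\alpha=p_\alpha^*\in V\cap\PP_1$; at each limit $\alpha<\omega_1$, which has countable cofinality, $\sigma$-closure supplies a lower bound. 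Then $\{p_\alpha^*:\alpha<\omega_1\}$ is an uncountable antichain of $\PP_1$ already in $V$, contradicting Lemma \ref{lemaP1}(a). For (ii), take any $\{(p_\alpha,q_\alpha):\alpha<\omega_2\}\subseteq\PP_1\times\PP_2$; precaliber $\omega_2$ from Lemma \ref{lemaP1}(b) gives an $I\subseteq\omega_2$ of size $\omega_2$ with $\{p_\alpha:\alpha\in I\}$ centered, so pairwise incompatibility of the pairs would force $\{q_\alpha:\alpha\in I\}$ to be an antichain in $\PP_2$ of size $\omega_2$, contradicting Lemma \ref{lemaP2}(b).

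For (iii), cardinals $\leq\omega_1$ are preserved since $\PP_2$ is $\sigma$-closed and $\PP_1$ is c.c.c.\ over $V^{\PP_2}$ by (i), while cardinals $\geq\omega_2$ are preserved by the $\omega_2$-c.c.\ from (ii). For $\cc=\omega_2$: $\sigma$-closure means $\PP_2$ adds no reals and preserves $[\omega_2]^\omega$, so $V^{\PP_2}\models\mathrm{CH}$ and $\aleph_2^{\aleph_0}=\aleph_2$ there; a nice-name count for subsets of $\omega$ against the c.c.c.\ forcing $\PP_1$ then gives $\cc\leq\omega_2$ in $V^{\PP_1\times\PP_2}$, while the lower bound is inherited from $V^{\PP_1}\models\cc=\omega_2$ (Lemma \ref{lemaP1}(d)). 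For (iv), I view $V^{\PP_1\times\PP_2}$ as a $\PP_2(\omega_3\setminus A)$-extension of $V^{\PP_1\times\PP_2(A)}$ by Lemma \ref{lemaP2}(a). Working there, pick names $\dot{f}_\xi$ for surjections $\kappa\to Y_\xi$; each value-name $\dot{f}_\xi(\alpha)$ has a maximal antichain of size $\leq\omega_1$ by the $\omega_2$-c.c.\ of the side forcing, whose conditions each have countable support in $\omega_3\setminus A$, yielding total support of $(\dot{f}_\xi)_\xi$ of cardinality $\leq\kappa$. This support is a subset of $\omega_3\setminus A$ living in $V^{\PP_1\times\PP_2(A)}$; since $\PP_1\times\PP_2(A)$ is also $\omega_2$-c.c.\ by the same precaliber argument as in (ii), a standard covering argument produces $A'\in V$ with $|A'|\leq\kappa$ containing this support, so $(Y_\xi)\in V^{\PP_1\times\PP_2(A\cup A')}$.

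For (v), I factor $\PP_2=\PP_2(A)\times\PP_2(\{\beta\})\times\PP_2(\omega_3\setminus(A\cup\{\beta\}))$ by Lemma \ref{lemaP2}(a), so $G_\beta$ is generic for the middle factor over $V^{\PP_1\times\PP_2(A)}$. Suppose for contradiction that $X\cap G_\beta\in V^{\PP_1\times\PP_2(A)}$; then some $p\in\PP_2(\{\beta\})\cap G$ forces $X\cap\dot{G}_\beta=\dot{Z}$ for a $\PP_1\times\PP_2(A)$-name $\dot{Z}$. Since $\dom p$ is countable and $X\subseteq\omega_1$ is uncountable, pick $\alpha\in X$ with $(\beta,\alpha)\notin\dom p$; the two one-point extensions $p\cup\{((\beta,\alpha),i)\}$ for $i=0,1$ disagree on $\alpha\in G_\beta$ and hence on $\alpha\in X\cap G_\beta$, yet share the same trivial $\PP_1\times\PP_2(A)$-part, so $p$ cannot decide $\alpha\in\dot{Z}$, a contradiction. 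I expect the support-counting handoff between the intermediate and full models in (iv) to be the most delicate step; the other parts are classical product-forcing arguments once the preservation of c.c.c.\ in (i) is secured.
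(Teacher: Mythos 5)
Your proposal is correct, and its overall architecture coincides with the paper's: the product lemma to reorder the factors, precaliber $\omega_2$ times $\omega_2$-c.c.\ for (ii), a support-plus-covering argument for (iv), and the same density argument for (v). Two parts take a genuinely different (though equally standard) route. For (i), the paper simply observes that $\check{\mathbb{P}}_1=\mathbb{P}_1$ in $V^{\mathbb{P}_2}$ (since $\sigma$-closed forcing adds no new finite functions or countable sets) and re-applies Lemma \ref{lemaP1}(a) inside $V^{\mathbb{P}_2}$; you instead prove the relevant half of Easton's lemma directly, deciding a putative uncountable antichain along a decreasing $\omega_1$-chain of $\mathbb{P}_2$-conditions and reflecting it to $V$ by absoluteness of incompatibility. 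Your version is more robust, as it needs nothing about $\mathbb{P}_1$ beyond its c.c.c.\ in $V$, whereas the paper's one-liner implicitly requires that the proof of Lemma \ref{lemaP1}(a) still runs in $V^{\mathbb{P}_2}$. For the continuum computation in (iii) you also factor in the opposite order ($\mathbb{P}_2$ first, then nice names over the c.c.c.\ $\mathbb{P}_1$ using $\aleph_2^{\aleph_0}=\aleph_2$ in $V^{\mathbb{P}_2}$), which avoids the paper's appeal to the $\omega_1$-Baireness of $\check{\mathbb{P}}_2$ over $V^{\mathbb{P}_1}$, i.e., the other half of Easton's lemma. In (iv), where the paper just cites Lemma VIII 2.2 of \cite{kunen}, you supply the argument; the one justification you should make explicit is that the $\omega_2$-c.c.\ of the quotient $\mathbb{P}_2(\omega_3\setminus A)$ \emph{over the intermediate model} $V^{\mathbb{P}_1\times\mathbb{P}_2(A)}$ (needed to bound the antichains deciding $\dot f_\xi(\alpha)$) cannot be obtained by rerunning the $\Delta$-system argument there, since CH fails in that model; it should instead be derived from (ii) via the standard fact that if $P\times Q$ is $\omega_2$-c.c.\ then $P$ forces $\check Q$ to be $\omega_2$-c.c. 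This is a gap in bookkeeping rather than in substance.
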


\begin{proof} In this proof we will be often 
 using the product lemma (Theorem VIII 1.4. of \cite{kunen}). It implies that
$\PP_1\times\PP_2$ can be viewed as the forcing iterations $\PP_1*\check{\PP}_2$
or $\PP_2*\check{\PP}_1$.

Note that in $V^{\PP_2}$ we have that $\check{\PP}_1=\PP_1$
and so, Lemma \ref{lemaP1} (a) implies (i).
For (ii) note that any product of an $\omega_2$-c.c. forcing and
a forcing which has precaliber $\omega_2$ is $\omega_2$-c.c., so
Lemma \ref{lemaP1} (b) and Lemma \ref{lemaP2} (b) imply (ii).

For (iii) note that $\omega_1$ is preserved by $\PP_2*\check{\PP}_1$ since 
it is preserved by $\PP_2$ by Lemma \ref{lemaP2} (b) and later by
$\check{\PP}_1$ by (i). Other cardinals are preserved by (ii).
In $V^{\PP_1}$ we have $\cc=\omega_2$ by Lemma \ref{lemaP1} (d).
It is also true in $V^{\PP_1\times\PP_2}$ since in $V^{\PP_1}$
the forcing $\check{\PP}_2$ is $\omega_1$-Baire and hence it does not add reals.

(iv) is a consequence of the standard factorization, as for example in
Lemma VIII 2.2. of \cite{kunen}, which can be applied by (ii) and Lemma \ref{lemaP2} (a).

For (v) consider in $V^{\PP_1\times \PP_2(A)}$
$$D_Y=\{p\in \check{\PP}_2(\{\beta\}): p^{-1}(\{1\})\cap X\cap dom(p)\not =Y\cap X\cap dom(p)\}$$
where $Y$ is any subset of $\omega_1$ in $V^{\PP_1\times \PP_2(A)}$.
Since for any $q\in \check{\PP}_2(\{\beta\})$ one can find a finite extension 
$p$ satisfying $p^{-1}(\{1\})\cap X\cap dom(p)\not =Y\cap X\cap dom(p)\}$ we may
conclude that $p\in \check{\PP}_2(\{\beta\})$ and hence, $D_Y$ is a dense subset
of $\check{\PP}_2(\{\beta\})$ which belongs to  $V^{\PP_1\times \PP_2(A)}$.
Now,  by the product lemma, $G_\beta$ as in (v) is
a $\check{\PP}_2(\{\beta\})$-generic over $V^{\PP_1\times\PP_2(A)}$ and so
 we may conclude (v).

\end{proof}

To prove the main result of this paper, we need a combinatorial lemma 
concerning measures over a Boolean subalgebra of $\wp(\omega_1)$.

\begin{lemma}\label{lemaCombinatorio}
Let $\mathcal{B}$ be a Boolean subalgebra of $\wp(\omega_1)$ which contains
all finite sets of $\omega_1$, 
$L$ its Stone space and let $(X_\gamma)_{\gamma \in \omega_2} 
\subseteq \mathcal{B}$ be a strong almost disjoint family.
 Given a family $(\mu_\xi)_{\xi \in \omega_1}$ in $C(L)^*$, there is $\gamma_0 \in \omega_2$ such that 
$$\forall \gamma \in (\gamma_0, \omega_2)\quad \forall
 \xi \in \omega_1 \quad \forall X\subseteq X_\gamma
\quad \text{if }X \in \mathcal{B}, \text{ then }\mu_\xi([X]) =  \sum_{\lambda \in X}
\mu_\xi([\{\lambda\}]),$$
where $[X]$ denotes the clopen subset of $L$ corresponding to $X$ by the Stone duality.
\end{lemma}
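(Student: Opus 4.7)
The plan is to replace each $\mu_\xi$ by its ``non-atomic'' (on the isolated points of $L$) part and show that this residue must essentially vanish on $[X_\gamma]$ for all but countably many $\gamma$. For each $\lambda \in \omega_1$, since $\{\lambda\} \in \mathcal{B}$, the principal point $p_\lambda \in L$ is isolated, and $\mu_\xi(\{p_\lambda\}) = \mu_\xi([\{\lambda\}])$. Since $\sum_{\lambda} |\mu_\xi([\{\lambda\}])| \leq \|\mu_\xi\| < \infty$, only countably many of these values are nonzero, so I can define the signed Borel measure
\[
\tilde\mu_\xi := \mu_\xi - \sum_{\lambda \in \omega_1} \mu_\xi([\{\lambda\}])\,\delta_{p_\lambda},
\]
which satisfies $\tilde\mu_\xi(\{p_\lambda\}) = 0$ for every $\lambda$. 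The conclusion of the lemma is then equivalent to the statement $\tilde\mu_\xi([X]) = 0$ for every $\xi < \omega_1$ and every $X \in \mathcal{B}$ with $X \subseteq X_\gamma$.

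The key reduction is to show that for each fixed $\xi$, the set
\[
S_\xi := \{\gamma < \omega_2 : |\tilde\mu_\xi|([X_\gamma]) > 0\}
\]
is countable. Given this, each $\sup S_\xi < \omega_2$ (as any countable subset of $\omega_2$ is bounded), and then $\gamma_0 := \sup_{\xi<\omega_1} \sup S_\xi < \omega_2$ since the cofinality of $\omega_2$ is strictly greater than $\omega_1$. For $\gamma > \gamma_0$ and any $X \subseteq X_\gamma$ in $\mathcal{B}$, the containment $[X] \subseteq [X_\gamma]$ together with $|\tilde\mu_\xi|([X_\gamma]) = 0$ forces $\tilde\mu_\xi([X]) = 0$, which is exactly what we want.

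To prove that $S_\xi$ is countable, I argue by contradiction via the pigeonhole principle: if $S_\xi$ is uncountable, some integer $n$ yields an uncountable subfamily with $|\tilde\mu_\xi|([X_\gamma]) > 1/n$. Pick any finite subfamily $F$ of size $N$. Strong almost disjointness gives that for distinct $\gamma, \gamma' \in F$, the intersection $[X_\gamma] \cap [X_{\gamma'}] = [X_\gamma \cap X_{\gamma'}]$ is a finite disjoint union of isolated points $p_\lambda$; since $\tilde\mu_\xi(\{p_\lambda\}) = 0$ implies $|\tilde\mu_\xi|(\{p_\lambda\}) = 0$, all pairwise (and hence all higher) intersections among the $[X_\gamma]$ in $F$ have $|\tilde\mu_\xi|$-measure zero. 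Additivity of the positive measure $|\tilde\mu_\xi|$ then gives
\[
|\tilde\mu_\xi|\Bigl(\bigcup_{\gamma \in F}[X_\gamma]\Bigr) = \sum_{\gamma \in F}|\tilde\mu_\xi|([X_\gamma]) > \frac{N}{n},
\]
which, since $N$ is arbitrary, contradicts $|\tilde\mu_\xi|(L) = \|\tilde\mu_\xi\| < \infty$.

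The only delicate point is the passage from ``$\tilde\mu_\xi$ vanishes on each isolated point'' to ``$|\tilde\mu_\xi|$ vanishes on each isolated point''; this is immediate, since the total variation of a singleton is the absolute value of the measure of the singleton. Everything else is routine bookkeeping: the strong almost disjointness of $(X_\gamma)$ is transferred to Stone-space clopens via the functoriality of Stone duality, and the final sup-of-sups argument uses only the cofinality of $\omega_2$.
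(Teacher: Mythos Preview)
Your proof is correct and is, in fact, a cleaner reorganisation of the paper's argument. Both proofs ultimately rest on the same mechanism---pigeonhole to fix a threshold, then exploit strong almost disjointness to produce too many nearly disjoint sets of uniformly positive measure---but the packaging differs. The paper works directly with $\mu_\xi$ and must therefore argue in two stages: first a claim that the ``remainder'' $[X]\setminus\bigcup_{\lambda\in X}[\{\lambda\}]$ has $\mu_\xi$-measure zero for large $\gamma$, and then a second contradiction to handle the discrepancy between $\mu_\xi(\bigcup_{\lambda\in X}[\{\lambda\}])$ and $\sum_{\lambda\in X}\mu_\xi([\{\lambda\}])$, splitting $X$ into the countable set where $\mu_\xi([\{\lambda\}])\neq 0$ and its complement. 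Your device of passing at once to $\tilde\mu_\xi=\mu_\xi-\sum_\lambda\mu_\xi([\{\lambda\}])\delta_{p_\lambda}$ and then to the positive measure $|\tilde\mu_\xi|$ collapses these two stages into one: since $|\tilde\mu_\xi|$ already vanishes on every isolated point, the finite pairwise intersections $[X_\gamma\cap X_{\gamma'}]$ automatically have measure zero and a single finiteness-of-total-variation argument suffices. What your route buys is brevity and a slightly sharper intermediate statement (each $S_\xi$ is countable, not merely of size $<\omega_2$); what the paper's route buys is that it never leaves the original measures, avoiding the auxiliary construction.
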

\begin{proof}
Let us first prove the following:
\vspace{0.2cm}

\noindent{\bf{Claim.}}
There is $\gamma' \in \omega_2$ such that
$$\forall \gamma \in (\gamma', \omega_2)\quad \forall \xi \in
 \omega_1 \quad \forall X\subseteq X_\gamma, \quad \text{if }X \in \mathcal{B}, 
\text{ then } \mu_\xi([X]) = \mu_\xi(\bigcup_{\lambda \in X} [\{\lambda\}]).$$

\noindent \textit{Proof of the claim.} Suppose by contradiction
 that the claim does not hold. Then, there is $A \subseteq \omega_2$ 
of cardinality $\omega_2$ such that for every $\gamma \in A$ there 
are $\xi_{\gamma} \in \omega_1$ and $Y_\gamma \subseteq X_{\gamma}$ such that
$$\mu_{\xi_\gamma}([Y_\gamma]) \neq \mu_{\xi_\gamma}(\bigcup_{\lambda \in Y_\gamma} [\{\lambda\}]).$$
Since there are at most $\omega_1$ possibilities for $\xi_\gamma$, 
we may assume without loss of generality that $\xi_\gamma =
 \xi$ for a fixed $\xi \in \omega_1$. Also, we can assume 
without loss of generality that there is a natural number $m$ such that for all $\gamma \in A$,
$$|\mu_\xi([Y_\gamma])- \mu_\xi(\bigcup_{\lambda \in Y_\gamma} [\{\lambda\}])|> \frac{1}{m}.$$

Let $n$ be a natural number greater than $m \cdot \Vert \mu_\xi\Vert$ and let
$\gamma_1,\dots,\gamma_n$ be different ordinals in $A$ such that 
$$\mu_\xi([Y_{\gamma_i}])- \mu_\xi(\bigcup_{\lambda \in Y_{\gamma_i}} [\{\lambda\}])$$
are either all positive or all negative. 

Since $(X_\gamma)_{\gamma \in \omega_2}$ is a strong almost disjoint
 family in $\mathcal{B}$, it follows that $(Y_\gamma)_{\gamma \in A}$ 
is also a strong almost disjoint family in $\mathcal{B}$ and so, putting 
$$E_\gamma = [Y_\gamma] \setminus \bigcup_{\lambda \in Y_\gamma} [\{\lambda\}],$$
we have that $(E_\gamma)_{\gamma \in A}$ is a pairwise disjoint 
family of Borel subsets of $L$. Then,
$$|\mu_\xi (\bigcup_{i=1}^n E_{\gamma_i})| =
 \sum_{i=1}^n |\mu_\xi([Y_{\gamma_i}])- 
\mu_\xi(\bigcup_{\lambda \in Y_{\gamma_i}} [\{\lambda\}])| \geq n \cdot \frac{1}{m} > \Vert \mu_\xi\Vert,$$
a contradiction, which concludes the proof of the claim.
\vskip 6pt
Let us now prove the lemma. Suppose by contradiction that the lemma does not hold.
 Then, there is $A \subseteq (\gamma', \omega_2)$ of cardinality
 $\omega_2$ such that for every $\gamma \in A$ there 
are $\xi_{\gamma} \in \omega_1$ and $Y_\gamma \subseteq X_{\gamma}$ such that
$$\mu_{\xi_\gamma}([Y_\gamma]) \neq \sum_{\lambda \in Y_\gamma} \mu_{\xi_\gamma}([\{\lambda\}]).$$
By the previous claim, we conclude that
$$\mu_{\xi_\gamma}(\bigcup_{\lambda \in Y_\gamma} [\{\lambda\}])
 \neq \sum_{\lambda \in Y_\gamma} \mu_{\xi_\gamma}([\{\lambda\}]).$$
Since there are at most $\omega_1$ possibilities for $\xi_\gamma$, 
we may assume without loss of generality that $\xi_\gamma = \xi$ for a fixed
 $\xi \in \omega_1$. Also, we can assume without loss of generality
 that there is a natural number $m$ such that for all $\gamma \in A$,
$$|\mu_\xi(\bigcup_{\lambda \in Y_\gamma} [\{\lambda\}])- 
\sum_{\lambda \in Y_\gamma} \mu_\xi([\{\lambda\}])|> \frac{1}{m}.$$

Fix $\gamma \in A$. Let $Z_\gamma = \{\lambda \in Y_\gamma: 
\mu_\xi([\{\lambda\}]) \neq 0\}$ and $W_\gamma = Y_\gamma 
\setminus Z_\gamma$. $Z_\gamma$ is a countable set and since $\mu_\xi$ is $\sigma$-additive,
$$\mu_\xi(\bigcup_{\lambda \in Z_\gamma} [\{\lambda\}]) = \sum_{\lambda \in Z_\gamma} \mu_\xi([\{\lambda\}]).$$
Then, putting 
$$\delta_\gamma = \mu_\xi(\bigcup_{\lambda \in Y_\gamma}
 [\{\lambda\}])-\sum_{\lambda \in Y_{\gamma}} \mu_\xi([\{\lambda\}]),$$
using the fact that $([\{\lambda\}])_{\lambda \in \omega_1}$
 is a pairwise disjoint family and that $\mu_\xi([\{\lambda\}])=0$ 
for any $\lambda \in W_\gamma$, we get that 
\begin{displaymath}
\begin{array}{rl}
\delta_\gamma = & \mu_\xi(\displaystyle{\bigcup_{\lambda \in Z_\gamma}}
 [\{\lambda\}]) + \mu_\xi(\displaystyle{\bigcup_{\lambda \in W_\gamma}}
 [\{\lambda\}]) - \displaystyle{\sum_{\lambda \in Z_\gamma}} \mu_\xi([\{\lambda\}]) -
 \displaystyle{\sum_{\lambda \in W_\gamma}} \mu_\xi([\{\lambda\}])\\
= & \mu_\xi(\displaystyle{\bigcup_{\lambda \in Z_\gamma}} [\{\lambda\}]) -
 \displaystyle{\sum_{\lambda \in Z_\gamma}} \mu_\xi([\{\lambda\}]) +
 \mu_\xi(\displaystyle{\bigcup_{\lambda \in W_\gamma}} [\{\lambda\}])  
=  \mu_\xi(\displaystyle{\bigcup_{\lambda \in W_\gamma}} [\{\lambda\}]).
\end{array}
\end{displaymath}

Let $n$ be a natural number greater than $m \cdot \Vert \mu_\xi\Vert$
 and let $\gamma_1,\dots,\gamma_n$ be different ordinals in $A$ such that
 $(\delta_{\gamma_j})_{1 \leq j \leq n}$ are either all positive or all negative.
 Since $W_\gamma \subseteq Y_\gamma \subseteq X_\gamma$ and $(X_\gamma)_{\gamma \in \omega_2}$
 is strong almost disjoint, then for each $1 \leq j \leq n$, let $F_j$ be a finite subset of $W_{\gamma_j}$
 such that $(W_{\gamma_j} \setminus F_j)_{1 \leq j \leq n}$ are pairwise disjoint.
 Note that by the additivity of $\mu_\xi$, 
$$\mu_\xi( \bigcup_{\lambda \in W_{\gamma_j} \setminus F_j} [\{\lambda\}]) 
= \mu_\xi(\bigcup_{\lambda \in W_{\gamma_j}} [\{\lambda\}]) - \sum_{\lambda \in F_j} 
 \mu_\xi([\{\lambda\}]) = \mu_\xi(\bigcup_{\lambda \in W_{\gamma_j}} [\{\lambda\}]) = \delta_{\gamma_j},$$
since $\mu_\xi([\{\lambda\}]) = 0$ for every $\lambda \in W_\gamma$. Then,
$$|\mu_\xi(\bigcup_{j=1}^n \bigcup_{\lambda \in W_{\gamma_j} \setminus F_j} [\{\lambda\}])| =
|\sum_{j=1}^n \mu_\xi(\bigcup_{\lambda \in W_{\gamma_j} \setminus F_j} [\{\lambda\}])| 
=|\sum_{j=1}^n  \delta_{\gamma_j}| \geq n \cdot
\frac{1}{m} > \Vert \mu_\xi\Vert,$$
which is a contradiction and completes the proof of the lemma.
\end{proof}

For the sake of the proof of the main result, let us adopt the following notation:
if $\mathcal{A}$ is a Boolean algebra, then $C_\Q({\mathcal A})$ is
the set of all formal linear combinations of elements of $\mathcal{A}$
with rational coefficients. If $\mathcal{A}\subseteq \mathcal{B}$
are Boolean algebras, then $C_\Q({\mathcal A})$ can be identified with
a (nonclosed) linear subspace of $C(K)$, where $K$ is the Stone space of $\mathcal{B}$.
If $\mathcal{A}=\mathcal{B}$, then the subspace is norm-dense by the Stone-Weierstrass theorem.
The closure $\overline{C_\Q(\mathcal{A})}$ will mean the norm closure.
We can talk about linear bounded functionals $\nu$ defined on the spaces 
$C_\Q({\mathcal A})$, which correspond to  finitely additive bounded
measures on $\mathcal{A}$ (see Section 18.7 of \cite{semadeni}). If $\mathcal{A}=\mathcal{B}$,
 they have unique extensions to 
continuous linear functionals on $C(K)$, where $K$ is the Stone space of $\mathcal{A}$,
which can be interpreted as Radon measures on $K$.
It turns out that families of finitely additive measures viewed as functionals on 
$C_\Q({\mathcal A})$ can code all the information about operators between
Banach spaces $C(K)$. This is useful in the forcing context, since
if $\mathcal{A}_0\subseteq\wp(\omega_2)$ is  in an intermediate model, then such a measure could be
interpreted as a subset of  $\wp(\omega_2)\times\wp(\omega)$ which belongs to the intermediate model,
so that the factorization of Lemma \ref{lemaP1xP2} (iv) can be applied, 
while the corresponding Radon measure is at least as big as the
Stone space of the Boolean algebra.
On the other hand,
representing operators $T$ into
a $C(K)$ space as functions sending $x\in K$ to $T^*(\delta_x)$
in the dual space is quite classical (see Theorem VI 7.1. of \cite{dunford}).
Here $T^*:C^*(K)\rightarrow C^*(K)$ is the adjoint
operator of $T$ given by $T^*(\mu)(f)=\mu(T(f))$ where by the Riesz representation theorem
the elements of $C^*(K)$ are identified with the Radon measures on $K$.
\vskip 6pt

\begin{proof} of Theorem \ref{theoremmainresult}.

Let $V$ be a model of GCH. By Lemma \ref{lemaP1xP2}, we have that
 $\mathbb{P}_1\times \mathbb{P}_2$ preserves cardinals
 and in the extension $V^{\mathbb{P}_1\times \mathbb{P}_2}$
 we have $\cc=\omega_2$.

By Fact \ref{facttopologicalreduction}, it is enough to prove that there
is no universal Banach space of density $\cc$
which is of the form $C(K)$ where  $K$ is totally disconnected, i.e., where $K$ is
 the Stone space of a Boolean algebra.
In the extension $V^{\mathbb{P}_1 \times \mathbb{P}_2}$, take any Boolean algebra $\mathcal{A}$
 of cardinality $\omega_2=\cc$. We will prove that 
 $C(K)$ is not a universal Banach space
of density $\cc$, where $K$ is the Stone space of $\mathcal{A}$. 

Since $\mathcal{A}$ has cardinality $\omega_2$, we may
assume that $\mathcal{A}$ is
a subalgebra of $\wp(\omega_2)$ and so,
Lemma \ref{lemaP1xP2} (iv) applies to the sequence of its elements
and hence, there is $\alpha<\omega_3$ such that $\mathcal{A}\in
  V^{\mathbb{P}_1\times \mathbb{P}_2(\alpha)}$.

Let $\mathcal{B}$
 be the Boolean subalgebra of $\wp(\omega_1)$ 
 of all subsets of $\omega_1$
 which are in $V^{\mathbb{P}_1\times \mathbb{P}_2(\alpha+\omega_2)}$
 and let $L$ be its Stone space in $V^{\mathbb{P}_1\times \mathbb{P}_2}$.

We will prove that in $V^{\mathbb{P}_1\times \mathbb{P}_2}$
the space  $C(L)$ cannot be isomorphically embedded in $C(K)$, 
which will give  that $C(K)$ is not a universal Banach 
space of density $\cc$, concluding the proof. Suppose it can be
 isomorphically embedded and let us derive a contradiction.

Work in $V^{\mathbb{P}_1\times \mathbb{P}_2}$.

Let $T:C(L)\rightarrow C(K)$ be an isomorphic embedding 
and $T^{-1}: T[C(L)]\rightarrow C(L)$
its inverse. Let $\mathcal{B}_0\subseteq\mathcal{B}\subseteq \wp(\omega_1)$ be the Boolean algebra of all
finite and cofinite subsets of $\omega_1$.

\vskip 6pt
\noindent{\bf Claim 1.} There is a  Boolean algebra $\mathcal{A}_0\subseteq \mathcal{A}$
and a bounded sequence
of finitely additive bounded measures $(\nu_\xi:\xi\in\omega_1)$ on $\mathcal{A}_0$
such that 
\begin{enumerate}
\item $|\mathcal{A}_0|\leq \omega_1$,
\item If $\xi\in\omega_1$ and $\overline{\rho}_\xi\in C(K)^*$ is such that
$\overline{\rho}_\xi(\chi_{[a]})=\nu_\xi(a)$ for each $a\in \mathcal{A}_0$, then
for each $\lambda\in\omega_1$ we have 
$\overline{\rho}_\xi(T(\chi_{[\{\lambda\}]}))=\delta_\xi(\{\lambda\})$.
\end{enumerate}

\noindent{\it Proof of the claim}:

For each $f\in C(K)$ there is a countable subalgebra $\mathcal{A}_f\subseteq \mathcal{A}$
such that $f\in \overline{C_{\Q}(\mathcal{A}_f)}$, because we can approximate $f$ by 
finite linear combinations with rational coefficients
of characteristic functions of clopen sets. So, take
any $\mathcal{A}_0$ such that $\mathcal{A}_{T(\chi_{[\{\lambda\}]})}\subseteq \mathcal{A}_0$
for every $\lambda\in\omega_1$.

Let $\phi_\xi=(T^{-1})^*(\delta_\xi)$ be a bounded linear functional on
$T[C(L)]$ which corresponds to $\delta_\xi$ on $C(L)$. In particular, we have that
$$\phi_\xi(T(\chi_{[\{\lambda\}]}))=\delta_\xi(\{\lambda\}).$$

Of course, $||\phi_\xi||\leq ||(T^{-1})^*||$, so the sequence of $\phi_\xi$'s is bounded.
By the Hahn-Banach theorem, for any $\xi\in\omega_1$, $\phi_\xi$ has a
norm-preserving extension $\psi_\xi$ which
 is defined on $C(K)$. Finally, for $\xi\in\omega_1$, let $\nu_\xi$
be the finitely additive bounded measure (see Section 18.7 of \cite{semadeni})
 on $\mathcal{A}_0$ defined by
$$\nu_\xi(a)=\psi_\xi(\chi_{[a]}).$$

Now suppose that $\overline{\rho}_\xi\in C(K)^*$ is such that
$\overline{\rho}_\xi(\chi_{[a]})=\nu_\xi(a)$ for each $a\in \mathcal{A}_0$. Then, 
$\overline{\rho}_\xi(\chi_{[a]})=\psi_\xi(\chi_{[a]})$ for each $a\in \mathcal{A}_0$, and so, by
the linearity, the Stone-Weierstrass theorem and the continuity, we may conclude that
$\overline{\rho}_\xi|\overline{C_{\Q}(\mathcal{A}_0)}=\psi_\xi|\overline{C_{\Q}(\mathcal{A}_0)}$.
But by the choice of $\mathcal{A}_0$, we have that $T(\chi_{[\{\lambda\}]})\in \overline{C_{\Q}(\mathcal{A}_0)}$
for each $\lambda\in\omega_1$ and so,
$$\overline{\rho}_\xi(T(\chi_{[\{\lambda\}]}) = \psi_\xi(T(\chi_{[\{\lambda\}]})) = \phi_\xi (T(\chi_{[\{\lambda\}]})) = \delta_{\xi}(\{\lambda\}),$$
which concludes the proof of the claim.

\vskip 6pt

By Lemma \ref{lemaP1xP2} (iv),
 we can find $B \subseteq \omega_3 \setminus \alpha$ of cardinality 
$\omega_1$ such that $B \in V$ and the sequence $(\nu_\xi:\xi<\omega_1)$ and Boolean algebra $\mathcal{A}_0$
are in  $V^{\mathbb{P}_1\times \mathbb{P}_2(\alpha\cup B)}$.
Now working in $V^{\mathbb{P}_1\times \mathbb{P}_2(\alpha\cup B)}$,
apply Tarski's theorem (Proposition 17.2.9 of \cite{semadeni}) to extend $\nu_\xi$'s to norm-preserving 
finitely additive bounded measures $\rho_\xi$ on $\mathcal{A}$. 
\vskip 6pt

\noindent{\bf Claim 2.}
For every $g\in C_{\Q}(\mathcal{A})$, the sequence
$(\int gd\rho_\xi: \xi<\omega_1)$ belongs to $V^{\mathbb{P}_1\times \mathbb{P}_2(\alpha\cup B)}$.

\noindent{\it Proof of the claim}: Both the
algebra $\mathcal{A}$ and the sequence $(\rho_\xi:\xi<\omega_1)$ belong to
$V^{\mathbb{P}_1\times \mathbb{P}_2(\alpha\cup B)}$.
Hence $C_{\Q}(\mathcal{A})$ is in this model and 
 the evaluation
of the integrals follows their linearity and depends only on the values of
the measures on $\mathcal{A}$. This completes
the proof of the claim.

\vskip 6pt

Now work again in  $V^{\mathbb{P}_1\times \mathbb{P}_2}$.  Consider the strong almost disjoint family
  $(X_\gamma: \gamma<\omega_2)$ added by $\mathbb{P}_1$ (by Lemma \ref{lemaP1})
 and the adjoint operator $T^*: C(K)^*\rightarrow C(L)^*$. 
 For $\xi\in\omega_1$, let $\overline{\rho}_\xi$ be the unique functional on $C(K)$
extending $\rho_\xi$ (see Section 18.7 of \cite{semadeni}), so for
each $a\in \mathcal{A}_0$ we have
$$\overline{\rho}_\xi(\chi_{[a]})=\rho_\xi(a)=\nu_\xi(a).$$
By Claim 1, we have that $\overline{\rho}_\xi(T(\chi_{[\{\lambda\}]}))=\delta_\xi(\{\lambda\})$
for each $\xi,\lambda\in\omega_1$.

Now,
let $\mu_\xi$ be the
Radon measure on $L$ corresponding to the functional $T^*(\overline{\rho}_\xi)$ on $C(L)$. In particular, for each $\xi,\lambda\in\omega_1$ we have 
\begin{equation}\label{eq2}
\mu_\xi([\{\lambda\}]) = \overline{\rho}_\xi(T(\chi_{[\{\lambda\}]}))= 
\delta_\xi(\{\lambda\}).
\end{equation}

By Lemma \ref{lemaCombinatorio}, there is $\gamma\in\omega_2$
 such that for each $\xi\in\omega_1$, we have 
$$\mu_\xi([X])=\sum_{\lambda \in X}\mu_\xi([\{\lambda\}])$$
for every $X\subseteq X_{\gamma}$, $X \in \mathcal{B}$. Then, 
if $\beta <\alpha + \omega_2$ is such that $\sup (\alpha+\omega_2)
 \cap B < \beta$ (which certainly exists because $B$ has cardinality
 $\omega_1$) and $G_{\beta}$ is the
projection
of the $\PP_1\times\PP_2$-generic $G$ over $V$ on the
$\beta$-th coordinate in $\PP_2$, we have that 
\begin{equation}\label{eq1}
\mu_\xi([X_\gamma\cap G_{\beta}])=
\sum_{\lambda\in X_\gamma \cap G_{\beta}}\mu_\xi([ \{\lambda \}]). 
\end{equation}

Taking $f=T(\chi_{[X_\gamma\cap G_{\beta}]})$ and combining equalities (\ref{eq2}) and (\ref{eq1}), it follows that 
$$\overline{\rho}_\xi(f) = \mu_\xi([X_\gamma\cap G_{\beta}])= 
\displaystyle{\sum_{\lambda \in X_\gamma \cap G_{\beta}}} \delta_\xi(\{\lambda\}) =
\left\{ \begin{array}{rl}
1 & \text{if } \xi\in X_\gamma\cap G_{\beta}\\
0 & \text{if } \xi\not \in X_\gamma\cap G_{\beta}.
\end{array}\right.
$$ 
Now, take $g\in C_{\Q}(\mathcal{A})$ 
 such that  $||g-f||<1/3||(T^*)^{-1}||$.

Using the fact that
$$\Vert \overline{\rho}_\xi \Vert = \Vert \phi_\xi \Vert  =
 ||(T^*)^{-1}(\delta_\xi)|| \leq ||(T^*)^{-1}||\cdot \Vert \delta_\xi \Vert \leq ||(T^*)^{-1}||,$$ 
we get that $|\overline{\rho}_\xi(g-f)|<1/3$, hence 
$$\int gd\rho_\xi=\int g d\overline{\rho}_\xi \left\{\begin{array}{rl}
> 2/3 & \text{if }\xi\in X_\gamma\cap G_{\beta}\\
<1/3 & \text{if }\xi\not \in X_\gamma\cap G_{\beta}.
\end{array} \right.$$
Since the formulas $\int g\rho_\xi >2/3$ and $\int g\rho_\xi
 <1/3$ are absolute, by Claim 2 we conclude that 
$X_\gamma\cap G_{\beta}$ belongs to 
$V^{\mathbb{P}_1\times \mathbb{P}_2(\alpha \cup B)}$,
 which contradicts Lemma \ref{lemaP1xP2} (v) and concludes the proof. 

\end{proof}

\section{$\ell_\infty/c_0$ may not be among existing universal Banach spaces}

Our main purpose in this section is to prove that the Banach space 
$\ell_\infty/c_0$ may fail to be a universal space of
 density $\cc$ and at the same time there may
exist universal Banach spaces of
 density $\cc$. Actually, we prove that this situation takes place
 in the model obtained by adding $\omega_2$ Cohen 
reals to a model of GCH. Moreover, the reason why 
$\ell_\infty/c_0$ is not universal can be seen quite explicitly, namely in that model it contains no
 isomorphic copy of $C([0, \cc])$.

\begin{definition}
Let $\mathbb{P}$ be the forcing formed by partial 
functions $f$ whose domains $\dom f$ are finite subsets
 of $\omega_2 \times \omega$ and whose ranges are 
included in $2=\{0, 1\}$, ordered by extension of functions.
 \end{definition}

\begin{theorem} Assume $GCH$.
$\mathbb{P}$ forces that there is a universal Banach space of density $\cc$.
\end{theorem}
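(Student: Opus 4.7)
The plan is to apply Fact \ref{facttopologicalreduction} and reduce the problem to constructing in $V^\mathbb{P}$ a Boolean algebra of cardinality $\cc=\omega_2$ that is universal under Boolean embeddings; then the Banach space $C(K)$, for $K$ the Stone space of this algebra, is an isometrically universal Banach space of density $\cc$.

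The first step is to pin down the cardinal arithmetic in $V^\mathbb{P}$. Since $\mathbb{P}$ is c.c.c.\ and, under GCH in $V$, has cardinality $\omega_2$, a standard nice-name count gives $\cc=\omega_2$ in $V^\mathbb{P}$ and, crucially, $2^{\omega_1}=\omega_2$: the number of nice $\mathbb{P}$-names for a subset of $\omega_1$ is bounded by $|\mathbb{P}|^{\omega_1}=\omega_2^{\omega_1}=\omega_2$ using GCH in $V$. In particular, in $V^\mathbb{P}$ there are at most $\omega_2$ isomorphism types of Boolean algebras of cardinality $\leq\omega_1$. I would then carry out the standard J\'onsson-style transfinite construction of a homogeneous-universal Boolean algebra $\mathcal{A}$ of cardinality $\omega_2$. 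The abstract ingredients are the amalgamation property for Boolean algebras (given $B\hookrightarrow C_1$ and $B\hookrightarrow C_2$ there is $D$ with embeddings $C_i\hookrightarrow D$ agreeing on $B$, realized as the pushout of the dual Stone spaces) together with the trivial joint embedding property and closure under directed unions. Using the cardinal count, I enumerate in $\omega_2$ steps all pairs (small subalgebra of the current approximation, small extension to be amalgamated in), amalgamating at successor stages and taking unions at limits; reflecting at ordinals of cofinality $\omega_1$ keeps the approximations $\mathcal{A}_\alpha$ of cardinality $\leq\omega_1$ until the final union. The result $\mathcal{A}=\bigcup_{\alpha<\omega_2}\mathcal{A}_\alpha$ has cardinality $\omega_2$ and enjoys the extension property: any Boolean embedding $B\hookrightarrow\mathcal{A}$ of a subalgebra of cardinality $\leq\omega_1$ extends to an embedding of any Boolean overalgebra $B\subseteq C$ of cardinality $\leq\omega_1$.

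To finish, I take an arbitrary Boolean algebra $\mathcal{C}$ of cardinality $\leq\omega_2$, write it as a continuous increasing union $\mathcal{C}=\bigcup_{\alpha<\omega_2}\mathcal{C}_\alpha$ with $|\mathcal{C}_\alpha|\leq\omega_1$, and inductively construct embeddings $f_\alpha:\mathcal{C}_\alpha\hookrightarrow\mathcal{A}$, using the extension property at successors and continuity at limits; the union gives $\mathcal{C}\hookrightarrow\mathcal{A}$. Fact \ref{facttopologicalreduction} then delivers the desired universal Banach space of density $\cc$. The main obstacle is the bookkeeping inside the J\'onsson recursion: keeping the intermediate $\mathcal{A}_\alpha$ of cardinality $\leq\omega_1$ while having enough stages to enumerate every (small subalgebra, small extension) task. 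This is precisely where the identity $2^{\omega_1}=\omega_2=\cc$, which is characteristic of the Cohen model, is used decisively; in models where $2^{\omega_1}>\cc$ this approach would break down, and one would need a different construction.
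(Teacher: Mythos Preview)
Your argument is correct but proceeds along a different line from the paper. The paper does not construct a universal Boolean algebra directly; instead it quotes two external results. First, Fremlin and Nyikos show that the existence of a $\cc$-saturated ultrafilter on $\N$ is equivalent to Martin's Axiom for countable posets together with $2^{<\cc}=\cc$, and both of these hold in the Cohen extension over a model of GCH. Second, Dow and Hart observe that a $\cc$-saturated ultrafilter yields a universal continuum of weight $\cc$. Fact~\ref{facttopologicalreduction} then finishes the job, exactly as in your last step.

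Your route is more self-contained: you only use the cardinal arithmetic $\cc=\omega_2$ and $2^{\omega_1}=\omega_2$ (equivalently $\cc^{<\cc}=\cc$), together with the amalgamation property of Boolean algebras, to run the classical J\'onsson construction of an $\omega_2$-homogeneous-universal algebra. In particular you never invoke MA for countable posets, so your argument applies in any model with $\cc$ regular and $2^{<\cc}=\cc$, not just the Cohen model. The paper's approach, by contrast, is shorter on the page (two citations) and ties the result to concrete objects of independent interest---saturated ultrafilters and universal continua---rather than an abstract Fra\"iss\'e-type limit. Both reductions land on Fact~\ref{facttopologicalreduction}, so the Banach-space conclusion is identical. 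Your bookkeeping sketch is adequate; the point that each $\leq\omega_1$-sized subalgebra of $\mathcal{A}$ already sits inside some $\mathcal{A}_\alpha$ follows from $\mathrm{cf}(\omega_2)>\omega_1$, and the count $\omega_2^{\omega_1}=\omega_2$ bounds the number of tasks, so the recursion closes.
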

\begin{proof}
This is just the conjunction of the results of \cite{dowhart} and \cite{fremlinnyikos}. 
As noted in \cite{dowhart} at the beginning of Section 6,
in \cite{fremlinnyikos} it is proved 
that the existence of a $\cc$-saturated ultrafilter on $\N$ is equivalent to
the conjunction of Martin's Axiom for countable partial orders and
$2^{<\cc}=\cc$. The former holds in any model obtained by adding 
at least $\cc$ Cohen reals and the latter holds in any model
obtained by a c.c.c. forcing of size $\omega_2$ which adds $\omega_2$ reals
over a model of CH$+ 2^{\omega_1}=\omega_2$.
Hence, if we assume GCH, $\mathbb{P}$ forces that there is a  $\cc$-saturated ultrafilter on $\N$.
Now we use the observation included at the end of
Section 5 of \cite{dowhart} that the existence of a $\cc$-saturated ultrafilter on $\N$
implies that there is a universal continuum of weight $\cc$ and apply
Fact \ref{facttopologicalreduction}.
\end{proof}

Now we proceed to the proof of the fact that $\mathbb{P}$ forces 
that $\ell_\infty/c_0$ contains no
 isomorphic copy of $C([0, \omega_2])$. This is motivated by
a result and the proof of Kunen in \cite{kunendoc} saying that  $\mathbb{P}$ forces that 
$\wp(\N)/Fin$ has no well-ordered chains of length $\cc$.

\begin{definition}
A nice-name for an element of  $\ell_\infty$ 
is a name of the form 
$$\dot{f} = \bigcup_{{n,m} \in \N\times \N} \{\langle [[\check{n},\check{m}],\check{q}_{n,m}(p)], p \rangle: p\in A_{n,m}\},$$   
where $[[\check{n},\check{m}],\check{q}]$ stands for the canonical
name for an ordered pair whose first element is ordered pair $\langle\check{n}, \check{m}\rangle$
 and whose second element is $q$;
$A_{{n,m}}$'s are maximal antichains in $\mathbb{P}$; and $q_{n,m}:A_{n,m}\rightarrow \Q$ are functions.

Given a nice-name $\dot{f}$ for an element of $\ell_\infty$ as above, we define the support of $\dot{f}$ by
$$supp (\dot{f}) = \bigcup \{\dom (p):  p \in A_{n,m}\}.$$
\end{definition}

Thus, formally the value of a nice-name for an element of
$\ell_\infty$ is a function  $f:\N\times\N\rightarrow\Q$. This can be
treated as a code for an element of $\ell_\infty$ for example 
if we associate with such an $f$ the element
of $\ell_\infty$ (formally a subset of $\R^\N$) equal to $\lim_{m\rightarrow\infty}f(n,m)$ at $n$.

\begin{theorem} Assume CH.
$\mathbb{P}$ forces that there is no isomorphism of $C([0, \omega_2])$ into $\ell_\infty/c_0$. 
\end{theorem}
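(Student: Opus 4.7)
The plan is to adapt Kunen's symmetric-permutation argument from \cite{kunendoc}---that $\PP$ forces $\wp(\N)/Fin$ to have no well-ordered chain of length $\cc$---to the Banach-space setting. Assume for contradiction that in $V^{\PP}$ there is an isomorphic embedding $T: C([0, \omega_2]) \to \ell_\infty/c_0$. The clopen characteristic functions $e_\xi := \chi_{[0,\xi]} \in C([0, \omega_2])$, $\xi < \omega_2$, form a well-ordered chain under $\leq$, and their images $a_\xi := T(e_\xi) \in \ell_\infty/c_0$ are pairwise at distance at least $c := \Vert T^{-1}\Vert^{-1}$. Lift each $a_\xi$ to $g_\xi \in \ell_\infty$ with $g_\xi + c_0 = a_\xi$, and fix a nice-name $\dot{g}_\xi$ for it with countable support $S_\xi \subseteq \omega_2 \times \omega$ (countability follows from the c.c.c.\ of $\PP$). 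Write $T_\xi := \pi_1(S_\xi) \in [\omega_2]^{\leq \omega}$.

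The first technical step is uniformization. Apply the $\Delta$-system lemma to $\{T_\xi : \xi < \omega_2\}$ to obtain $I_0 \in [\omega_2]^{\omega_2}$ with $\{T_\xi : \xi \in I_0\}$ a $\Delta$-system of countable root $\Delta$. Under CH in $V$, there are only $2^\omega = \omega_1$ distinct nice-names for elements of $\ell_\infty$ with support contained in any fixed countable subset of $\omega_2 \times \omega$, so a further pigeonhole shrinks $I_0$ to $I \in [\omega_2]^{\omega_2}$ on which all $\dot{g}_\xi$ have the same ``type'' relative to $\Delta$: for every $\xi, \eta \in I$, the canonical order-preserving bijection $T_\xi \to T_\eta$ fixing $\Delta$ pointwise, extended by the identity off $T_\xi \cup T_\eta$, gives a permutation $\sigma_{\xi\eta}$ of $\omega_2$ whose induced automorphism $\hat\sigma_{\xi\eta}$ of $\PP$ sends $\dot g_\xi$ to $\dot g_\eta$. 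Fixing three indices $\xi_0 < \xi_1 < \xi_2$ in $I$ and taking the involution $\sigma$ that swaps $T_{\xi_0} \setminus \Delta$ with $T_{\xi_2} \setminus \Delta$ canonically and is the identity off $T_{\xi_0} \cup T_{\xi_2}$, we obtain $\hat\sigma(\dot g_{\xi_0}) = \dot g_{\xi_2}$, $\hat\sigma(\dot g_{\xi_2}) = \dot g_{\xi_0}$, and $\hat\sigma(\dot g_{\xi_1}) = \dot g_{\xi_1}$ (the last because $T_{\xi_1} \cap (T_{\xi_0} \cup T_{\xi_2}) \subseteq \Delta$, where $\sigma$ is the identity).

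The contradiction should come from a forced asymmetric statement about $g_{\xi_0}, g_{\xi_1}, g_{\xi_2}$ which $\hat\sigma$ flips, mirroring the asymmetry of strict almost-inclusion $A_{\xi_0} \subsetneq^* A_{\xi_2}$ in Kunen's chain argument. The natural source of such asymmetry is Hahn-Banach: extend $\delta_{\xi_1} \circ T^{-1}$ from $T[C([0, \omega_2])]$ to a functional $\psi \in (\ell_\infty/c_0)^*$ of norm at most $\Vert T^{-1}\Vert$, represented by a bounded finitely additive measure $\nu$ on $\N$ vanishing on finite sets, satisfying $\int g_{\xi_0}\,d\nu = 0$ and $\int g_{\xi_2}\,d\nu = 1$. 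A condition $p$ forcing a rational witness to $\int (g_{\xi_2} - g_{\xi_0})\,d\nu > 1/2$ becomes, under $\hat\sigma$, a condition forcing the opposite inequality, so a $\sigma$-invariant common strengthening of $p$ and $\hat\sigma(p)$ would force both---a contradiction.

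The main obstacle---precisely what the authors refer to when remarking that ``not all permutations which can be used in Kunen's proof would work in our argument''---is that $\nu$ is chosen non-constructively by Hahn-Banach and has no a priori controlled support in the forcing sense. The technical heart of the proof must therefore be to preselect, in $V^{\PP}$, a family $(\nu_\xi)_{\xi < \omega_2}$ of such measures, to code enough of their information (rational approximations to their values on specific $g_\eta$, say) by objects with countable support, and to fold this coding into the uniformization step so that the involution $\sigma$ additionally preserves a suitable name for $\nu_{\xi_1}$. Only with this extra respect for the $\nu_\xi$ does the Kunen-style three-point swap produce genuinely contradictory forced inequalities and close the argument.
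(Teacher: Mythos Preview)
Your proposal has a genuine gap precisely at the point you flag as ``the technical heart,'' and the suggested fix does not close it. The permutation lemma (Kunen, VII~7.13) transfers forced statements whose only parameters are the names $\dot g_{\xi_0},\dot g_{\xi_1},\dot g_{\xi_2}$. Your candidate asymmetry is \emph{not} such a statement: the existential formula ``there is a bounded finitely additive $\nu$ vanishing on finite sets with $\int g_{\xi_0}\,d\nu<\tfrac12<\int g_{\xi_2}\,d\nu$'' is, after the swap, the equally true statement that \emph{some other} $\nu'$ witnesses the reverse inequality, which is no contradiction. To force a contradiction you must pin down one particular $\nu_{\xi_1}$ as a parameter, and then its name enters the formula. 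But $\nu_{\xi_1}$ is a point of $(\ell_\infty/c_0)^*$, i.e.\ it encodes continuum-many reals (its values on all $[A]\in\wp(\N)/Fin$), so it has no countable-support nice-name; and even restricting attention to the two reals $\nu_{\xi_1}(g_{\xi_0}),\,\nu_{\xi_1}(g_{\xi_2})$ gives names indexed by \emph{pairs} (or triples) of ordinals, which cannot be absorbed into a single $\Delta$-system refinement over the index $\xi$. Your last paragraph gestures at ``folding this coding into the uniformization step,'' but does not say what object with countable support attached to $\xi_1$ alone would determine these values, and I do not see one.

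The paper's proof bypasses Hahn--Banach entirely. After the same $\Delta$-system/isomorphism-type uniformization you describe, it chooses $2k$ indices $\alpha_1<\dots<\alpha_{2k}$ in $X$ (with $k$ large relative to $\|T\|\cdot\|T^{-1}\|$) and works with the quantity
\[
\Bigl\|\sum_{i=1}^k\bigl([\dot f_{\alpha_{2i}}]_{c_0}-[\dot f_{\alpha_{2i-1}}]_{c_0}\bigr)\Bigr\|,
\]
which is a formula in the $\dot f_{\alpha_j}$ alone, with no auxiliary functional. Since the intervals $(\alpha_{2i-1},\alpha_{2i}]$ are pairwise disjoint, the preimage under $T$ has norm $1$, so this quantity is forced to be at most $\|T\|$. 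The key is the specific permutation $\sigma(\alpha_{2i-1})=\alpha_i$, $\sigma(\alpha_{2i})=\alpha_{2k-i+1}$: after applying the corresponding automorphism one obtains the same norm bound for the configuration with intervals $(\alpha_i,\alpha_{2k-i+1}]$, all of which contain $\alpha_k$, so the preimage has norm at least $k-1$ and the bound fails. This is the meaning of the authors' remark that ``not all permutations which can be used in Kunen's proof would work''---it refers to the need for this particular $2k$-point rearrangement creating a norm distortion, not to any measure-theoretic consideration. The moral is that the asymmetry should be encoded in a norm computation intrinsic to the $f_\alpha$'s, rather than via an externally chosen functional.
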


\begin{proof} Assume CH
 and suppose that there was in $V^\mathbb{P}$
 an isomorphism of $C([0, \omega_2])$ into $\ell_\infty/c_0$:
 let $\dot{T}$ be a name for it. Fix $k \in \mathbb{N}$  and $q\in \Q$
such that  $\mathbb{P} \Vdash \Vert \dot{T} \Vert \leq {\check q}$ and
$\mathbb{P} \Vdash {\check q}\cdot \Vert \dot{T}^{-1} \Vert < k-1$. 

For each $\alpha < \omega_2$, let $\dot{f}_\alpha$ be 
a nice-name for an element of $\ell_\infty$ such that
 $\mathbb{P} \Vdash [\dot{f}_\alpha]_{c_0} = 
\dot{T}(\check{\chi}_{[0, \alpha]})$, where $[\dot{f}_\alpha]_{c_0}$ 
denotes the equivalence class of $\dot{f}_\alpha$ in 
$\ell_\infty/c_0$. Let $A_\alpha = supp (\dot{f}_\alpha)
 \subseteq \omega_2$. 

By CH and the $\Delta$-system lemma, there is $X \subseteq \omega_2$ of cardinality $\omega_2$
such that $(A_\alpha)_{\alpha \in X}$ form a 
$\Delta$-system and $\alpha\in A_\alpha$. Then, using standard arguments we
 may assume w.l.o.g. that whenever $\alpha<\beta$ and $\alpha,\beta\in X$, there
is an order-preserving function $\sigma_{\alpha,\beta}: A_\alpha\rightarrow A_\beta$
such that $\sigma_{\alpha,\beta}(\alpha)=\beta$, 
which is constant on $A_\alpha\cap A_\beta$ and such that
it lifts up to an isomorphism 
$\pi_{\alpha,\beta}:\PP(A_\alpha\cup A_\beta)\rightarrow \PP(A_\alpha\cup A_\beta)$ such that
$\pi_{\alpha,\beta}^2=Id_{\PP(A_\alpha\cup A_\beta)}$,
$\pi_{\alpha,\beta}^*(\dot{f}_\alpha)=\dot{f}_\beta$ and 
where $\pi^*$ is the lifting of $\pi$ to the $\PP$-names as in Definition VII 7.12. of \cite{kunen}.
As any finite permutation is a composition of cycles, for any finite $F\subseteq X$ and any
permutation $\sigma:F\rightarrow F$ there is a permutation
${\pi}_\sigma:\omega_2\rightarrow\omega_2$ such that 
\begin{equation}
\pi^*_\sigma({\dot f}_{\alpha})={\dot f}_{\sigma(\alpha)}
\end{equation}

Now, let $\sigma$ be a permutation of $\omega_2$
 with the following property: there are $\alpha_{1}
 < \dots < \alpha_{{2k}} <\omega_2$ all in $X$ such that 

\begin{equation}\label{1}
\sigma(\alpha_{{2i-1}}) = \alpha_i \text{ and } \sigma(\alpha_{{2i}})= 
\alpha_{{2k-(i-1)}}, \text{ for all } 1 \leq i \leq k.
\end{equation}
By Lemma VII 7.13 (c) of \cite{kunen} and (3.1), for any formula $\phi$ and
any permutation $\sigma$ of $X$ we have

\begin{equation}\label{2}
\\P
\Vdash \phi(\dot{f}_{\alpha_1},..., \dot{f}_{\alpha_{2k}})\ \ \  \hbox{\rm iff}\ \ \
\\P
\Vdash \phi(\dot{f}_{\sigma(\alpha_1)},..., \dot{f}_{\sigma(\alpha_{2k})}).
\end{equation}

Notice that $((\alpha_{2i-1}, \alpha_{2i}])_{1 \leq i \leq k}$ 
are pairwise disjoint clopen intervals and so, it follows that
$$ \Vert \sum_{i=1}^k \chi_{[0, \alpha_{2i}]} 
- \chi_{[0, \alpha_{2i-1}]}\Vert = \Vert \sum_{i=1}^k
 \chi_{(\alpha_{2i-1}, \alpha_{2i}]} \Vert =
\Vert \sum_{i=1}^k
 \chi_{(\alpha_{1}, \alpha_{2k}]} \Vert =1,$$
which implies that
$$\mathbb{P} \Vdash \Vert \sum_{i=1}^k \chi_{[0, \alpha_{2i}]} 
- \chi_{[0, \alpha_{2i-1}]}\Vert =1$$
and consequently,
$$\mathbb{P} \Vdash \Vert \sum_{i=1}^k [\dot{f}_{\alpha_{2i}}]_{c_0} 
- [\dot{f}_{\alpha_{2i-1}}]_{c_0} \Vert  = \Vert \dot{T}(\sum_{i=1}^k 
\chi_{[0, \alpha_{2i}]} - \chi_{[0, \alpha_{2i-1}]}) \Vert \leq \check{q}.$$
Then, by (\ref{2}),
\begin{equation}\label{3}
\mathbb{P} \Vdash \Vert \sum_{i=1}^k [\dot{f}_{\sigma(\alpha_{2i})}]_{c_0} -
 [\dot{f}_{\sigma(\alpha_{2i-1})}]_{c_0} \Vert \leq \check{q}.
\end{equation}

But  (\ref{1}) yields that $\alpha_k \in 
(\alpha_i, \alpha_{2k-(i-1)}]$ for every $1 \leq i < k$, so that 
\begin{equation*}
\begin{array}{cl}
\Vert \displaystyle{\sum_{i=1}^k} 
\chi_{[0, \sigma(\alpha_{2i})]} - 
\chi_{[0, \sigma(\alpha_{2i-1})]}\Vert & = 
\Vert \displaystyle{\sum_{i=1}^k} \chi_{[0, \alpha_{2k -(i-1)}]} - \chi_{[0, \alpha_i]}\Vert \\ 
   & = \Vert \displaystyle{\sum_{i=1}^k} \chi_{(\alpha_i, \alpha_{2k -(i-1)}]} \Vert \geq k-1,
\end{array}
\end{equation*}
and then, $\mathbb{P}$ also forces that
$$ \begin{array}{cl}
k-1 & \leq \Vert\displaystyle{\sum_{i=1}^k} \chi_{[0, \sigma(\alpha_{2i})]} - 
\chi_{[0, \sigma(\alpha_{2i-1})]}\Vert = \Vert \dot{T}^{-1} 
(\displaystyle{\sum_{i=1}^k} [\dot{f}_{\sigma(\alpha_{2i})}]_{c_0} -
 [\dot{f}_{\sigma(\alpha_{2i-1})}]_{c_0}) \Vert \\
& \leq \Vert \dot{T}^{-1} \Vert
 \cdot \Vert \displaystyle{\sum_{i=1}^k} [\dot{f}_{\sigma(\alpha_{2i})}]_{c_0}
 - [\dot{f}_{\sigma(\alpha_{2i-1})}]_{c_0} \Vert
\end{array}
$$
which implies that 
$$\mathbb{P} \Vdash \Vert \sum_{i=1}^k [\dot{f}_{\sigma(\alpha_{2i})}]_{c_0} -
 [\dot{f}_{\sigma(\alpha_{2i-1})}]_{c_0} \Vert \geq \frac{k-1}{\Vert \dot{T}^{-1} \Vert} > \check{q},$$
contradicting equation (\ref{3}) and concluding the proof.
\end{proof}

It is natural to ask if we can directly conclude the nonexistence
of an embedding of $C([0,\cc])$ into $\ell_\infty/c_0$ from
the fact that $\wp(\N)/Fin$ does not have well-ordered chains of length
$\cc$ in this model. This could be done, for example, if we could prove that
$Clop(K)$ has a well-ordered chain of length $\kappa$
whenever $C([0, \kappa])$ embeds isomorphically into $C(K)$, for
$K$ totally disconnected. 

However, this is not the case even if the embedding is
isometric. It is clear that $C([0, \kappa])$ isometrically
embeds into $C(K)$, where $K$ is the dual ball of
$C([0, \kappa])$ with the weak$^*$ topology.
Using Kaplansky's theorem (Theorem 4.49 of \cite{fabianetal}) saying
that any Banach space has countable tightness in the weak topology
and comparing the weak and the weak$^*$ topology in $K$, we can prove that 
in $K$ there are no sequences
$(U_\xi)_{\xi<\omega_1}$ of weak$^*$-open sets such that $\overline{U_\xi}\subseteq U_\eta$
for $\xi<\eta<\omega_1$.
Moreover, if $L$ is the standard totally disconnected preimage of $K$,
we can prove (using quite tedious and technical arguments which we do not include here) that $L$
has no uncountable well-ordered chains of clopen sets, but as $L$ is
a continuous preimage of $K$ we have an isometric copy
of $C([0,\kappa])$ inside $C(L)$.

Note, for example, that the situation with antichains instead of well-ordered chains is quite different.
For a totally disconnected $K$, 
$C(K)$ contains a copy of $C(L)$ where $L$ is the Stone
space of the Boolean algebra generated by an
uncountable pairwise disjoint family of elements (i.e., $C(L)$ is isomorphic to  $c_0(\omega_1)$)
if and only if 
the Boolean algebra
$Clop(K)$ contains a pairwise disjoint family of cardinality $\omega_1$ 
 (\cite{rosenthal}, Theorem 12.30 (ii) of \cite{fabianetal}). 

However, we still do not know if in the concrete case of $K=\beta\N\setminus\N$
it is possible not to have  well-ordered chains of length $\omega_2$ of clopen sets
and at the same time have an isomorphic (isometric) copy of $C([0,\omega_2])$
inside $C(\beta\N\setminus\N)\equiv l_\infty/c_0$.

\bibliographystyle{amsplain}

\end{document}